\documentclass[11pt]{amsart}

\usepackage[T1]{fontenc}
\usepackage{microtype}
\usepackage{mathtools}
\usepackage{array}
\usepackage{amssymb,eucal}
\usepackage[shortlabels]{enumitem}
\setlist[enumerate]{align=left,leftmargin=*,labelsep=1ex,parsep=.5ex,topsep=1ex}
\usepackage{hyperref}

\theoremstyle{plain}
\newtheorem{theorem}{Theorem}[section]
\newtheorem{proposition}[theorem]{Proposition}
\newtheorem{lemma}[theorem]{Lemma}
\newtheorem{corollary}[theorem]{Corollary}

\theoremstyle{definition}
\newtheorem{definition}[theorem]{Definition}
\newtheorem{example}[theorem]{Example}
\newtheorem*{notation}{Notation}

\theoremstyle{remark}
\newtheorem{remark}[theorem]{Remark}

\newcommand{\eP}{\EuScript{P}}
\newcommand{\T}{\EuScript{T}}
\newcommand{\Set}{\mathtt{Set}}
\newcommand{\cMsc}{\mathtt{cMsc}}

\newcommand{\VMsc}{\mathtt{VMsc}}
\newcommand{\VMscG}{\mathtt{VMsc}_{\Gamma}}
\newcommand{\Pol}{\mathtt{Pol}}

\title{Valued mosaics}
\author{Alessandro Linzi}
\address{Latest affiliation: Center for Information Technologies and Applied Mathematics\\
University of Nova Gorica\\
5000 Nova Gorica, Slovenia}
\email{alessandro.linzi.phd@icloud.com}
\urladdr{https://orcid.org/0000-0002-1422-7674}
\subjclass[2020]{Primary 18D15, 20N20; Secondary 12J20, 16Y99}
\keywords{Valued mosaic, commutative mosaic, canonical hypergroup, tropical polygroup, slice category, valuation}

\begin{document}

\begin{abstract}
Nakamura--Reyes showed that the category \(\cMsc\) of commutative mosaics---unital reversible hypermagmas, without associativity---is complete, cocomplete, and has free objects, in contrast to commutative polygroups. Krasner's multivalued addition is designed around ultrametric balls; we take that valuation-theoretic motivation as primary and equip mosaics with valuations, as unit-reflecting unitary morphisms into the tropical polygroup \(\T(\Gamma)\). Value-preserving morphisms form the slice \(\cMsc/\T(\Gamma)\), which is complete and cocomplete. The larger lax category \(\VMscG\) of valued mosaics over a fixed ordered abelian group \(\Gamma\) is finitely complete and has all small coproducts; it recovers \(\cMsc\) for the trivial value group, while lax coequalizers for nontrivial \(\Gamma\) remain open. Associativity is analysed via \emph{factor nesting}, which implies it under totality and characterises it among total product-ultrametric mosaics such as \(\mathbb{K}\) and \(\T(\Gamma)\), yet is strictly weaker without totality. Among total valued mosaics satisfying factor nesting, the Krasner ball axiom yields associativity and upgrades the weak valuation so that sums are ultrametric balls and the superiorly canonical package follows.
\end{abstract}

\maketitle

\section{Introduction}
\label{sec:intro}

Hypergroups and hyperfields, after Krasner~\cite{Kra57}, have reappeared in tropical geometry, the arithmetic of the field with one element, algebraic geometry, and matroid theory; see Nakamura--Reyes~\cite{NR23} for context and references. Krasner's additive structure---a commutative polygroup, or ``canonical hypergroup''---was shaped by valuation theory: the multivalued sum is designed so that intersecting sums behave like balls in an ultrametric space, and associativity is part of that package.

From the categorical side, Nakamura and Reyes observed that associativity (and the consequent nonemptiness of products) is precisely what spoils many good properties: the category \(\Pol\) of commutative polygroups lacks free objects and fails to be complete or cocomplete, whereas the larger category \(\cMsc\) of \emph{commutative mosaics} (unital reversible commutative hypermagmas, products allowed empty, no associativity) is complete, cocomplete, regular, and admits free objects and a closed monoidal structure~\cite{NR23}.

We believe that the reason Krasner required associativity is inseparable from the \emph{valuation structure} of his examples. Consequently the natural order of abstraction is
\begin{enumerate}[label=(\roman*)]
\item first equip commutative mosaics with a valuation, obtaining a category \(\VMscG\) of valued mosaics over a fixed value group \(\Gamma\);
\item then study associativity, totality, and related properties \emph{as properties of valued objects},
\end{enumerate}
rather than forcing associativity on bare mosaics and treating valuations as an afterthought.

This paper develops the basic category theory of valued mosaics along those lines. The main results are summarized below.

\begin{theorem}[Main results]
\label{thm:intro-main}
Let \(\Gamma\) be a totally ordered abelian group, and let \(\VMscG\) be the category of valued commutative mosaics over \(\Gamma\).
\begin{enumerate}[label=(\alph*)]
\item\label{intro-a} \(\VMscG\) is finitely complete (Theorem~\ref{thm:vmsc-finitely-complete}).
\item\label{intro-b} \(\VMscG\) has all small coproducts: the underlying object is the Nakamura--Reyes wedge of mosaics. (Theorem~\ref{thm:vmsc-coprod}).
\item\label{intro-c} For the trivial value group \(\mathbf{0}=\{0\}\) one has \(\VMsc_{\mathbf{0}}\simeq\cMsc\), so \(\VMsc_{\mathbf{0}}\) is complete and cocomplete and has free objects on pointed sets (Remark~\ref{rem:triv-equiv}, Corollary~\ref{cor:triv-cocomplete}).
\item\label{intro-d} Value-preserving morphisms organise as the slice \(\cMsc/\T(\Gamma)\), which is complete and cocomplete (Theorem~\ref{thm:strict-slice-limits}, Corollary~\ref{cor:strict-slice-coeq}).
\item\label{intro-e} Factor nesting implies associativity under totality (Proposition~\ref{prop:fn-implies-asc}) and characterises it among total product-ultrametric mosaics (Theorem~\ref{thm:asc-fn}); free valued mosaics satisfy factor nesting without being associative (Section~\ref{sec:assoc}).
\item\label{intro-f} Among total valued mosaics satisfying factor nesting, the Krasner ball axiom yields associativity and the superiorly canonical package (Theorem~\ref{thm:kvh-sch}).
\end{enumerate}
\end{theorem}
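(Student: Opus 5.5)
Theorem~\ref{thm:intro-main} is a summary of results proved in the body of the paper, so the plan is to prove each item separately and then cite the corresponding theorem. Throughout, a valued mosaic is a pair \((M,v)\) with \(v\colon M\to\T(\Gamma)\) a unit-reflecting unitary morphism of mosaics. A morphism in \(\VMscG\) is a mosaic morphism \(f\) satisfying the lax condition \(v_N(f(x))\ge v_M(x)\).

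For \ref{intro-a}, I would build the terminal object and pullbacks directly.
\begin{itemize}
\item The terminal object is \(\T(\Gamma)\) itself, or a suitable one-point-over-\(\Gamma\) object, depending on how unit reflection interacts with the zero element.
\item For a pullback of \(f\colon M\to L\) and \(g\colon N\to L\), take the Nakamura--Reyes pullback \(P=M\times_L N\) in \(\cMsc\) and give it the valuation \(v_P(m,n)=\min(v_M(m),v_N(n))\).
\item I would then check three things: \(v_P\) is unitary and unit-reflecting; \(v_P(a\boxplus b)\supseteq\) respects the tropical hyperaddition, which holds because \(\min\) of two ultrametric-type inequalities is again one; and the universal property holds, which follows from laxness because \(\min\) is the meet.
\end{itemize}

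For \ref{intro-b}, the coproduct is the wedge \(\bigvee M_i\). It is valued by \(v_i\) on each summand, which is well defined because the summands share only \(0\). Sums of elements from distinct summands are the wedge sums, and these must land in \(\T(\Gamma)\)-compatible values; I would check this against the explicit description of the wedge hyperoperation.

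For \ref{intro-c}, \(\T(\mathbf 0)\) is the Krasner hyperfield's underlying mosaic \(\{0,1\}\). Every commutative mosaic carries exactly one unit-reflecting valuation into it, and every morphism is automatically lax. The forgetful functor is therefore an isomorphism of categories onto \(\cMsc\), and completeness, cocompleteness and free objects transfer from \cite{NR23}.

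For \ref{intro-d}, I would first verify that value-preserving morphisms are precisely the morphisms of \(\cMsc/\T(\Gamma)\). The limit and colimit properties then come from standard slice-category facts.
\begin{itemize}
\item Colimits in a slice are created by the forgetful functor to \(\cMsc\), so cocompleteness is immediate.
\item Limits are computed as wide pullbacks over \(\T(\Gamma)\) in \(\cMsc\), so completeness follows as well.
\end{itemize}
The only subtlety is that every object of the slice must stay unit-reflecting. I expect this condition to be stable under these constructions, and I would verify it directly.

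For \ref{intro-e} and \ref{intro-f}, I would work elementwise.
\begin{itemize}
\item To show factor nesting gives associativity under totality, take \(x\in(a+b)+c\). Factor nesting nests the factors appropriately, and totality guarantees the nonempty intermediate sums needed to produce a witness in \(a+(b+c)\).
\item For the converse among product-ultrametric mosaics, split into cases on the order of the values \(v(a),v(b),v(c)\).
\item For non-associativity of free objects, exhibit explicit elements.
\item For the Krasner ball axiom, show first that sums are balls \(\{z: v(z-c)>\gamma\}\). The superiorly canonical properties then follow from the ball structure.
\end{itemize}

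I expect \ref{intro-f} to be the main obstacle, specifically upgrading the weak valuation to an ultrametric-ball description without associativity available a priori. My first attempt would derive associativity via Proposition~\ref{prop:fn-implies-asc} and then use reversibility to identify sums with balls.
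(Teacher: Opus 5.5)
There are genuine gaps. Parts (c), (d) and the wedge in (b) track the paper. In (d), your wide-pullback description of slice limits is the right one: the forgetful functor from a slice creates connected limits and all colimits, not products or the terminal object. The direction total$+$FN$\Rightarrow$ASC in (e) also matches the paper.

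\emph{Part (a).} $\T(\Gamma)$ is not terminal in $\VMscG$. For any $(A,v)$ with $A\neq\mathbf{1}$, both $v$ and the constant map at $\infty$ are distinct lax morphisms $(A,v)\to(\T(\Gamma),\mathrm{id})$. You are importing the slice, where $\mathrm{id}_{\T(\Gamma)}$ is terminal. In the lax category the terminal object is the zero object $\mathbf{1}=\{0\}$ with $v(0)=\infty$ (Proposition~\ref{prop:vmsc-zero}). Pullbacks over it are the $\min$-products of Theorem~\ref{thm:vmsc-prod}.

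\emph{Part (e), the converse.} Your plan to prove ASC$\Rightarrow$FN by splitting into cases on $v(a),v(b),v(c)$ cannot work. Theorem~\ref{thm:asc-fn} is about bare mosaics, and (V1)--(V4) constrain values, never inclusions of sets. Under the trivial valuation all nonzero values coincide. Moreover, $\mathbb{K}\times\mathbb{K}$ with the $\min$-valuation is total and associative yet fails FN (Proposition~\ref{prop:asc-not-fn}). The hypothesis that does the work is product-ultrametricity:
\begin{itemize}
\item associativity turns $d\in(a\boxplus b)\boxplus c$ into $d\in a\boxplus(b\boxplus c)$;
\item Lemma~\ref{lem:triple-inter} rewrites this as $((-a)\boxplus d)\cap(b\boxplus c)\neq\emptyset$;
\item product-ultrametricity then nests the two sets, and the other clause is symmetric.
\end{itemize}

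\emph{Part (e), free mosaics.} You only plan to show that $\mathbf{F}(S)$ is non-associative. Part (e) also asserts that $\mathbf{F}(S)$ \emph{satisfies} FN. That needs its own case check (Proposition~\ref{prop:free-fn}), using that every nonempty sum in $\mathbf{F}(S)$ is a singleton.

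\emph{Part (f).} The key lemma is missing. In a mosaic $t\boxminus c$ is a set, so a ball $\{t\mid v(t-c)>\gamma\}$ is not defined until you show that $v$ is constant on $x\boxminus y$ for $x\neq y$ (Lemma~\ref{lem:unique-diff}(i)). This comes from (KVH) and (V4), not from reversibility:
\begin{itemize}
\item Suppose $z,t\in x\boxminus y$ with $v(z)<v(t)$. Then (V4) forces $v(x)=v(y)$, and $v(s)=v(z)$ for every $s\in z\boxminus t$.
\item (KVH) centred at $z$ then gives $v(z)>\rho_v+v(x)$. Here totality is needed so that $z\boxminus t\neq\emptyset$.
\item Applying (KVH) once more to the candidate $0$, for which $z\boxminus 0=\{z\}$, yields $0\in x\boxminus y$, i.e.\ $x=y$.
\end{itemize}
Associativity (from total$+$FN) then enters the ball picture only through the ultrametric inequality for $d_v$, via $x\boxminus z\subseteq(x\boxminus y)\boxplus(y\boxminus z)$. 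That inequality is what makes intersecting sums nest (SCH2). SCH1, SCH3 and SCH4 are then short consequences of (KVH) and (V4).

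\emph{Part (b).} The fact you defer is that cross-summand sums in the wedge are empty (Remark~\ref{rem:cross-empty}). So (V3)--(V4) impose nothing there, and there are no cross-summand values to make compatible.
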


The paper is organized as follows. Section~\ref{sec:background} recalls commutative mosaics. Section~\ref{sec:valued} defines valued mosaics as unit-reflecting morphisms into \(\T(\Gamma)\), introduces the strict slice and the lax category \(\VMscG\), and records examples. Section~\ref{sec:slice} shows that the strict slice \(\cMsc/\T(\Gamma)\) is complete and cocomplete by creation of (co)limits. Section~\ref{sec:lax} constructs finite limits and all small coproducts in the lax category by hand, while lax coequalizers for nontrivial \(\Gamma\) remain open. Section~\ref{sec:assoc} analyses associativity via factor nesting and the Krasner ball axiom. Section~\ref{sec:outlook} lists further directions.

\section{Background: commutative mosaics}
\label{sec:background}

We follow Nakamura--Reyes~\cite{NR23}, writing the hyperoperation of a commutative mosaic \emph{additively} (as for abelian groups). A \emph{hyperoperation} on a set \(A\) is a map \(A\times A\to\eP(A)\) to the power set (values may be empty). A morphism of hypermagmas (i.e., sets equipped with a hyperoperation) is a function \(\mu\) with \(\mu(x\boxplus y)\subseteq\mu(x)\boxplus\mu(y)\).

\begin{definition}[{\cite[Definition~2.3]{NR23}}]
\label{def:mosaic}
A \emph{mosaic} is a unital hypermagma that is reversible. Explicitly, a commutative mosaic \((A,\boxplus,0)\) satisfies:
\begin{enumerate}[label=(\roman*)]
\item \emph{identity:} \(0\boxplus a=a\boxplus 0=\{a\}\) for all \(a\in A\);
\item \emph{commutativity:} \(a\boxplus b=b\boxplus a\) for all \(a,b\in A\);
\item \emph{reversibility:} there is a (necessarily unique) involution \(a\mapsto -a\) such that
\[
c\in a\boxplus b
\;\iff\;
a\in c\boxplus(-b)
\;\iff\;
b\in(-a)\boxplus c
\]
for all \(a,b,c\in A\).
\end{enumerate}
Write \(\cMsc\) for the category of commutative mosaics and unitary morphisms.
\end{definition}

\begin{lemma}[{\cite[p.~415]{NR23}}]
\label{lem:inv-unit}
In any mosaic, \(0\in a\boxplus(-a)\cap(-a)\boxplus a\) for every \(a\in A\). In particular \(-a\) is the unique additive inverse of \(a\).
\end{lemma}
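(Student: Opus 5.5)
The argument uses only the identity axiom and reversibility from Definition~\ref{def:mosaic}. Fix \(a\in A\). The identity axiom gives \(a\in a\boxplus 0\). We then apply reversibility with \(c=a\), \(a\), and \(b=0\) in the roles of the definition. The first equivalence turns \(a\in a\boxplus 0\) into \(a\in a\boxplus(-0)\), which only re-reads the same statement once \(-0=0\) is known. This suggests a different reading: start from a membership whose sum involves \(0\) and move \(a\) across.

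The cleanest choice is to begin with \(a\in 0\boxplus a\), the identity axiom again. In the pattern \(c\in x\boxplus y\iff x\in c\boxplus(-y)\), take \(c=a\), \(x=0\) and \(y=a\). This yields \(0\in a\boxplus(-a)\). For the other containment, use the second equivalence \(c\in x\boxplus y\iff y\in(-x)\boxplus c\) with \(c=a\), \(x=a\) and \(y=0\), starting from \(a\in a\boxplus 0\). This yields \(0\in(-a)\boxplus a\). Commutativity would also give the second containment from the first. Together these show \(0\in a\boxplus(-a)\cap(-a)\boxplus a\).

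For uniqueness of the inverse, suppose \(0\in a\boxplus b\). Apply the second equivalence with \(c=0\), \(x=a\) and \(y=b\); this gives \(b\in(-a)\boxplus 0\). By the identity axiom \((-a)\boxplus 0=\{-a\}\), so \(b=-a\).

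The only delicate point is bookkeeping: choosing which variable plays which role in the reversibility equivalences so that a \(0\) lands on the correct side. No real obstacle remains beyond that, since the argument never needs to know \(-0=0\) in advance.
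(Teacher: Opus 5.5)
Your proposal is correct and follows the paper's proof essentially step for step. You obtain \(0\in a\boxplus(-a)\) from \(a\in 0\boxplus a\) using \(c\in x\boxplus y\Rightarrow x\in c\boxplus(-y)\), obtain \(0\in(-a)\boxplus a\) from \(a\in a\boxplus 0\) using \(c\in x\boxplus y\Rightarrow y\in(-x)\boxplus c\), and get uniqueness from \(b\in(-a)\boxplus 0=\{-a\}\), exactly as the paper does; like the paper's, your uniqueness argument needs only the one-sided hypothesis \(0\in a\boxplus b\).
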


\begin{proof}
By the identity law, \(a\in a\boxplus 0\). Reversibility in the form \(c\in x\boxplus y\Rightarrow y\in(-x)\boxplus c\), applied to \(c=a\), \(x=a\), \(y=0\), yields \(0\in(-a)\boxplus a\). Likewise \(a\in 0\boxplus a\); reversibility in the form \(c\in x\boxplus y\Rightarrow x\in c\boxplus(-y)\), applied to \(c=a\), \(x=0\), \(y=a\), yields \(0\in a\boxplus(-a)\). Uniqueness of inverses is as in~\cite{NR23}: if also \(0\in a\boxplus b\cap b\boxplus a\), then \(b\in (-a)\boxplus 0=\{-a\}\).
\end{proof}

\begin{definition}
\label{def:polygroup}
A \emph{commutative polygroup} is an associative commutative mosaic. Write \(\Pol\) for the full subcategory of \(\cMsc\) on commutative polygroups. Associativity forces products to be nonempty~\cite[Lemma~2.6]{NR23}.
\end{definition}

\begin{remark}[Associativity of a hyperoperation]
\label{rem:hyper-assoc}
For a binary operation \(A\times A\to A\), associativity is the familiar equality \((a\cdot b)\cdot c=a\cdot(b\cdot c)\) of single elements.
For a hyperoperation \(\boxplus\colon A\times A\to\eP(A)\), the same pattern is imposed on \emph{sets}: one extends \(\boxplus\) to subsets by
\[
X\boxplus Y
\;:=\;
\bigcup_{x\in X,\,y\in Y}x\boxplus y
\qquad
\bigl(X,Y\subseteq A\bigr),
\]
with the convention \(X\boxplus\emptyset=\emptyset\boxplus Y=\emptyset\), and one requires
\begin{equation}
\label{eq:hyper-assoc}
(a\boxplus b)\boxplus c
\;=\;
a\boxplus(b\boxplus c)
\qquad\text{as subsets of \(A\), for all \(a,b,c\in A\).}
\end{equation}
Thus \(d\) lies in the left-hand side if and only if there exists \(x\in a\boxplus b\) with \(d\in x\boxplus c\), and in the right-hand side if and only if there exists \(y\in b\boxplus c\) with \(d\in a\boxplus y\).
In particular~\eqref{eq:hyper-assoc} equates two \emph{existentially defined} collections of results of iterated multiaddition; it is not a universal condition on the graph of \(\boxplus\) (cf.\ Remark~\ref{rem:fn-universal}).
\end{remark}

\begin{proposition}[{\cite{NR23}}]
\label{Fs}
The forgetful functor \(\cMsc\to\Set_\bullet\) admits a left adjoint \(\mathbf{F}\). Explicitly, for a pointed set \((S,0)\),
\[
\mathbf{F}(S)=(S\setminus\{0\})\times\{-1,1\}\cup\{0\},
\]
with unit \(0\), inverses \(-(a,i)=(a,-i)\) and \(-0=0\), identity laws \(0\boxplus x=x\boxplus 0=\{x\}\), and for nonzero arguments
\[
(a,i)\boxplus(b,j)=\begin{cases}\{0\}&\text{if }a=b\text{ and }j=-i,\\ \emptyset&\text{otherwise.}\end{cases}
\]
\end{proposition}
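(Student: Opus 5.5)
We must show that \(\mathbf{F}(S)\) as described is a commutative mosaic, and that it is free on \((S,0)\): for every commutative mosaic \(A\) and every pointed map \(f\colon S\to A\) there is a unique unitary morphism \(\bar f\colon\mathbf{F}(S)\to A\) with \(\bar f(a,1)=f(a)\). The inclusion \(\eta\colon S\to\mathbf{F}(S)\) is \(a\mapsto(a,1)\) for \(a\neq0\) and \(0\mapsto0\).

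\textbf{Step 1: \(\mathbf{F}(S)\) is a commutative mosaic.} The identity law holds by definition. Commutativity also holds, because the condition ``\(a=b\) and \(j=-i\)'' is symmetric in the two arguments. For reversibility, I use the involution \(-(a,i)=(a,-i)\) and check \(c\in x\boxplus y\iff x\in c\boxplus(-y)\iff y\in(-x)\boxplus c\) by cases.
\begin{itemize}
\item[] If one of \(x,y\) is \(0\), all three statements reduce to the corresponding identity-law equalities.
\item[] If \(x,y\) are both nonzero, then \(c\in x\boxplus y\) forces \(c=0\) and \(y=-x\). In that case the other two conditions read \(x\in 0\boxplus x\) and \(-x\in(-x)\boxplus0\), both of which are true.
\item[] Conversely, if \(c=0\), then \(x\in 0\boxplus(-y)\) means exactly \(x=-y\). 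If \(c\neq0\) and \(y\neq0\), then \(c\boxplus(-y)\) is either \(\{c\}\)-free or equal to \(\{0\}\); it contains the nonzero \(x\) only if \(-y=0\), which is a contradiction.
\end{itemize}
This is a short but fiddly case analysis over whether each of \(x,y,c\) is \(0\); I expect it to be the only place where care is needed.

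\textbf{Step 2: existence of the extension.} Define \(\bar f(0)=0\), \(\bar f(a,1)=f(a)\), and \(\bar f(a,-1)=-f(a)\). We need \(\bar f(x\boxplus y)\subseteq\bar f(x)\boxplus\bar f(y)\).
\begin{itemize}
\item[] If one argument is \(0\), this is the identity law in \(A\).
\item[] If \(x\boxplus y=\emptyset\), the inclusion is trivial.
\item[] The remaining case is \(y=-x\). Here \(\bar f\) preserves negation by construction, so we need \(0\in\bar f(x)\boxplus(-\bar f(x))\), which is Lemma~\ref{lem:inv-unit}.
\end{itemize}
Unitarity, i.e.\ \(\bar f(0)=0\), holds by definition.

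\textbf{Step 3: uniqueness.} Any unitary morphism \(g\) extending \(f\) satisfies \(0=g(0)\in g(x)\boxplus g(-x)\). By the uniqueness of inverses in Lemma~\ref{lem:inv-unit}, this gives \(g(-x)=-g(x)\), provided the lemma's uniqueness statement applies with a single-sided membership; commutativity supplies the other side. Hence \(g\) is determined by its values on the elements \((a,1)\), so \(g=\bar f\).

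Naturality in \((S,0)\) then follows formally from this universal property, so \(\mathbf{F}\) is left adjoint to the forgetful functor.
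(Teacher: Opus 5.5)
The paper gives no proof of its own here; it imports the result from \cite{NR23}. Your direct verification is therefore a natural way to supply one, and it is correct. The three ingredients are exactly what is needed:
\begin{enumerate}[label=(\roman*)]
\item checking the mosaic axioms on \(\mathbf{F}(S)\);
\item extending \(f\) by \(\bar f(a,-1)=-f(a)\), with Lemma~\ref{lem:inv-unit} handling the only nonempty non-unit sums \(x\boxplus(-x)=\{0\}\);
\item getting uniqueness from the fact that unitary morphisms automatically preserve inverses.
\end{enumerate}
The left adjoint then follows from the existence of these universal arrows \(S\to\mathbf{F}(S)\).

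There are a few small points of precision, none of which affects correctness.
\begin{enumerate}[label=(\arabic*)]
\item When one of \(x,y\) is \(0\), not all three reversibility conditions are identity-law statements. For \(x=0\) and \(c,y\neq 0\), the middle condition \(0\in c\boxplus(-y)\) uses the inverse-pair rule and holds iff \(c=y\). The case \(y=0\) is analogous for \(0\in(-x)\boxplus c\).
\item In the converse case, ``\(\{c\}\)-free'' should read ``empty or \(\{0\}\)'': both arguments are nonzero, so \(c\boxplus(-y)\subseteq\{0\}\) cannot contain the nonzero \(x\). The direction starting from \(y\in(-x)\boxplus c\) is symmetric and should be stated as such.
\item The caveat in Step~3 is unnecessary. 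Reversibility alone turns \(0\in g(x)\boxplus g(-x)\) into \(g(-x)\in(-g(x))\boxplus 0=\{-g(x)\}\), without invoking commutativity.
\end{enumerate}
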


\begin{theorem}[{\cite{NR23}}]
\label{thm:cmsc-good}
The category \(\cMsc\) is complete and cocomplete. In particular it has all small products, equalizers, coproducts, and coequalizers.
\end{theorem}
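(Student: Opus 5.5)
The plan is to build limits and colimits concretely, starting from the forgetful functor to pointed sets, and then check the mosaic axioms by hand. Throughout, write \(U\colon\cMsc\to\Set_\bullet\) for this functor. Since \(\cMsc\) is locally small, completeness reduces to two constructions: small products and equalizers. Likewise, cocompleteness reduces to small coproducts and coequalizers.

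\textbf{Limits.} For a family \((A_k)_{k\in K}\), I take the cartesian product \(\prod_k A_k\). Its unit is \((0)_k\), inverses are taken coordinatewise, and the hyperoperation is \((a_k)\boxplus(b_k)=\prod_k(a_k\boxplus b_k)\). This set is empty as soon as one factor is empty, and that is harmless because empty sums are allowed. The identity law and commutativity hold coordinatewise. Reversibility holds because membership \(c\in a\boxplus b\) is checked coordinatewise, and each equivalence in Definition~\ref{def:mosaic}(iii) holds in every coordinate. The projections are morphisms. Given morphisms \(\mu_k\colon B\to A_k\), the induced map \(b\mapsto(\mu_k b)\) sends \(x\boxplus y\) into \(\prod_k(\mu_kx\boxplus\mu_ky)\), so it is a morphism. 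For the equalizer of \(f,g\colon A\rightrightarrows B\), I take \(E=\{a:f(a)=g(a)\}\) with the restricted hyperoperation \(a\boxplus_E b=(a\boxplus b)\cap E\). Then \(E\) contains \(0\) and is closed under negation, because a unitary morphism preserves inverses. That last fact follows from Lemma~\ref{lem:inv-unit} together with uniqueness of inverses, applied to \(0\in\mu(a)\boxplus\mu(-a)\). The restricted hyperoperation is reversible since \(E\) is closed under \(-\), and the universal property is immediate.

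\textbf{Colimits.} For coproducts I take the wedge \(\bigvee_k A_k\), meaning the disjoint union with all units identified. Sums within one \(A_k\) are the original ones, sums involving \(0\) are forced by the identity law, and sums of nonzero elements from different summands are \(\emptyset\). Reversibility holds because every relation \(c\in a\boxplus b\) with \(a,b,c\neq0\) lives inside a single summand. Relations involving \(0\) take the form \(0\in a\boxplus(-a)\), and these also live in one summand. For the universal property, given morphisms \(\mu_k\colon A_k\to C\), the induced map only has to respect the sums within each summand and sums with \(0\); the cross-summand sums are empty, so they impose no condition.

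\textbf{Coequalizers (main obstacle).} For \(f,g\colon A\rightrightarrows B\), I take the least equivalence relation \(\sim\) on \(B\) that contains \(f(a)\sim g(a)\) and is compatible with negation. I then define \(Q=B/{\sim}\) with \([x]\boxplus[y]=\{[z]: z\in x'\boxplus y' \text{ for some } x'\sim x,\ y'\sim y\}\). Reversibility then follows by transporting representatives: if \(z\in x'\boxplus y'\), then \(x'\in z\boxplus(-y')\), and \(-y'\sim -y\). The delicate point is the identity law, \([0]\boxplus[x]=\{[x]\}\). This can fail when \([0]\) is nontrivial, because \(0'\boxplus x'\) may contain elements outside \([x]\). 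So I expect to enlarge \(\sim\) to the least negation-stable equivalence relation closed under the rule: \(u\sim0\) and \(z\in u\boxplus x\) imply \(z\sim x\). This relation can be obtained by transfinite (indeed \(\omega\)-step) iteration, which forces the identity law and keeps the quotient map unitary. Finally I verify the universal property. If \(h\colon B\to C\) satisfies \(hf=hg\), then the kernel of \(h\) is negation-stable and satisfies the closure rule, because \(h(z)\in h(u)\boxplus h(x)=0\boxplus h(x)=\{h(x)\}\) whenever \(h(u)=0\). Hence \(h\) factors through \(Q\), and the factorization is a morphism by the definition of \(\boxplus\) on \(Q\). Checking that the enlarged relation still yields a reversible unital structure is the step I expect to require the most care; alternatively one can cite the construction in~\cite{NR23} directly.
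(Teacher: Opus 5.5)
Your proposal is correct and is essentially the construction the paper takes from \cite{NR23}: cartesian products with componentwise sums, equalizers with \(a\boxplus_E b=(a\boxplus b)\cap E\), wedge coproducts, and coequalizers via the short hyperoperation on a quotient of \(B\). Your closure of the relation under ``\(u\sim 0\) and \(z\in u\boxplus x\) imply \(z\sim x\)'' simply does the unitization step in one pass, and the step you flag as delicate is routine: reversibility of the quotient needs only negation-stability of the relation, the rule gives \([0]\boxplus[x]=\{[x]\}\) directly, and your kernel argument gives the universal property.
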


\section{Valued mosaics}
\label{sec:valued}

Let \(\Gamma\) be a totally ordered abelian group, written additively, and set \(\Gamma_\infty:=\Gamma\cup\{\infty\}\) with \(\infty>\gamma\) for all \(\gamma\in\Gamma\). The valuation axioms package the ultrametric inequalities that motivated Krasner's multivalued addition. As in the hyperfield setting~\cite{Lin23compl}, these axioms are equivalent to the existence of a \emph{unit-reflecting} unitary mosaic morphism into the tropical polygroup \(\T(\Gamma)\) (i.e.\ \(\varphi(a)=\infty\) if and only if \(a=0\)). We therefore present valuations as morphisms first, then introduce the strict slice of value-preserving morphisms and the larger lax category \(\VMscG\).

\begin{example}[Tropical]
\label{ex:trop-val}
On the generalized tropical polygroup \(\T(\Gamma)=\Gamma\cup\{\infty\}\) with
\[
\gamma\boxplus\infty=\infty\boxplus\gamma=\{\gamma\},
\qquad
\gamma\boxplus\delta=\begin{cases}\{\min(\gamma,\delta)\}&\gamma\neq\delta,\\ \{\varepsilon\mid\varepsilon\ge\gamma\}&\gamma=\delta,\end{cases}
\]
the identity \(v=\mathrm{id}\) is a valuation. This is the basic Krasner-type ultrametric multivalued addition; \(\T(\Gamma)\) is a commutative polygroup, hence an object of \(\cMsc\), with unit \(\infty\).
\end{example}

\begin{definition}
\label{def:valued-mosaic}
A \emph{valued commutative mosaic} (over \(\Gamma\)) is a pair \((A,v)\) where \(A\) is a commutative mosaic and \(v\colon A\to\Gamma_\infty\) satisfies, for all \(a,b,c\in A\),
\begin{enumerate}[label=(V\arabic*)]
\item\label{v1} \(v(a)=\infty\) if and only if \(a=0\);
\item\label{v2} \(v(-a)=v(a)\);
\item\label{v3} if \(c\in a\boxplus b\), then \(v(c)\ge\min\bigl(v(a),v(b)\bigr)\);
\item\label{v4} if \(c\in a\boxplus b\) and \(v(a)\neq v(b)\), then \(v(c)=\min\bigl(v(a),v(b)\bigr)\).
\end{enumerate}
\end{definition}

\begin{proposition}[Valuations are morphisms]
\label{prop:val-as-hom}
Let \(A\) be a commutative mosaic and \(\Gamma\) a totally ordered abelian group. There is a bijection between:
\begin{enumerate}[label=(\alph*)]
\item unitary morphisms of mosaics \(\varphi\colon A\to\T(\Gamma)\) such that \(\varphi(a)=\infty\) if and only if \(a=0\);
\item functions \(v\colon A\to\Gamma_\infty\) satisfying~\eqref{v1}--\eqref{v4}.
\end{enumerate}
Under the bijection, \(v=\varphi\) as set-functions \(\Gamma_\infty\cong\T(\Gamma)\).
\end{proposition}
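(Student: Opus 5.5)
The plan is to show that both descriptions define the same subset of functions \(A\to\Gamma_\infty\). Since the underlying set of \(\T(\Gamma)\) is \(\Gamma_\infty\) and the unit of \(\T(\Gamma)\) is \(\infty\), the bijection is then literally the identity \(v=\varphi\). Throughout I read the hyperoperation table of Example~\ref{ex:trop-val} as an elementwise criterion: for \(\gamma,\delta,\varepsilon\in\Gamma_\infty\),
\[
\varepsilon\in\gamma\boxplus\delta
\iff
\varepsilon\ge\min(\gamma,\delta),\ \text{with equality whenever }\gamma\neq\delta .
\]
This covers the cases involving \(\infty\) as well. If \(\delta=\infty\neq\gamma\), the criterion forces \(\varepsilon=\gamma\). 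If \(\gamma=\delta=\infty\), it forces \(\varepsilon=\infty\), which matches \(\infty\boxplus\infty=\{\infty\}\). I will first check this reformulation against the three cases of the table.

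\emph{From (b) to (a).} Given \(v\) satisfying \eqref{v1}--\eqref{v4}, I show \(\varphi:=v\) is a unit-reflecting unitary morphism.
\begin{enumerate}[label=(\arabic*)]
\item Axiom \eqref{v1} gives \(v(0)=\infty\), so \(\varphi\) is unitary, and it also gives unit reflection.
\item For the morphism condition \(v(a\boxplus b)\subseteq v(a)\boxplus v(b)\), take \(c\in a\boxplus b\).
\item If \(v(a)\neq v(b)\), then \eqref{v4} gives \(v(c)=\min(v(a),v(b))\).
\item If \(v(a)=v(b)\), then \eqref{v3} gives \(v(c)\ge\min(v(a),v(b))\).
\item In both cases the displayed criterion places \(v(c)\) in \(v(a)\boxplus v(b)\).
\end{enumerate}
If the paper's notion of morphism in \(\cMsc\) also demands compatibility with inverses, then \(\varphi(-a)=\varphi(a)=-\varphi(a)\) follows from \eqref{v2}, because every element of \(\T(\Gamma)\) is its own inverse.

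\emph{From (a) to (b).} Given a unit-reflecting unitary morphism \(\varphi\), set \(v:=\varphi\) and verify the axioms.
\begin{enumerate}[label=(\arabic*)]
\item Axiom \eqref{v1} is unitarity together with unit reflection.
\item For \eqref{v3} and \eqref{v4}, take \(c\in a\boxplus b\). The morphism condition gives \(\varphi(c)\in\varphi(a)\boxplus\varphi(b)\), and the displayed criterion then yields exactly \eqref{v3} and \eqref{v4}.
\item For \eqref{v2}, Lemma~\ref{lem:inv-unit} gives \(0\in a\boxplus(-a)\). Applying \(\varphi\) gives \(\infty\in\varphi(a)\boxplus\varphi(-a)\).
\item By the criterion, \(\infty\in\gamma\boxplus\delta\) forces \(\gamma=\delta\). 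Indeed, if \(\gamma\neq\delta\), the only possible value is \(\min(\gamma,\delta)\), which is finite. Hence \(\varphi(-a)=\varphi(a)\).
\end{enumerate}

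The constructions are mutually inverse trivially, since both are the identity on underlying functions. The only step that is not a direct translation is deriving \eqref{v2} from the morphism property, using Lemma~\ref{lem:inv-unit} and the fact that \(\infty\) can only occur in \(\gamma\boxplus\gamma\). The other point needing care is bookkeeping for the \(\infty\) cases of the tropical table, so that the elementwise criterion is genuinely equivalent to the definition.
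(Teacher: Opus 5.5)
Your proof is correct and follows the paper's argument. In both, the bijection is the identity on underlying functions: (V1) matches unitarity plus unit reflection, and (V3)--(V4) are read off the hyperoperation table of $\T(\Gamma)$ in both directions. The only small deviation is in deriving (V2) from a morphism: the paper writes $\varphi(-a)=-\varphi(a)=\varphi(a)$, implicitly using that unitary mosaic morphisms preserve inverses, whereas you get it directly from $0\in a\boxplus(-a)$ and the fact that $\infty\in\gamma\boxplus\delta$ forces $\gamma=\delta$. That is just the inverse-preservation argument specialised to $\T(\Gamma)$, so your version is slightly more self-contained.
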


\begin{proof}
Let \(\varphi\colon A\to\T(\Gamma)\) be a unitary mosaic morphism with \(\varphi(a)=\infty\Leftrightarrow a=0\). Then~\eqref{v1} holds for \(v=\varphi\), and \(\varphi(-a)=-\varphi(a)=\varphi(a)\) since every element of \(\T(\Gamma)\) is its own inverse. If \(c\in a\boxplus b\), then \(\varphi(c)\in\varphi(a)\boxplus\varphi(b)\). By the definition of \(\boxplus\) on \(\T(\Gamma)\):
\begin{itemize}
\item if \(\varphi(a)\neq\varphi(b)\), then \(\varphi(a)\boxplus\varphi(b)=\{\min(\varphi(a),\varphi(b))\}\), so \(\varphi(c)=\min(\varphi(a),\varphi(b))\);
\item if \(\varphi(a)=\varphi(b)\), then \(\varphi(a)\boxplus\varphi(b)=[\varphi(a),\infty]\), so \(\varphi(c)\ge\varphi(a)=\min(\varphi(a),\varphi(b))\).
\end{itemize}
Thus~\eqref{v1}--\eqref{v4} hold for \(v=\varphi\).

Conversely, let \(v\) satisfy~\eqref{v1}--\eqref{v4}. Define \(\varphi\colon A\to\T(\Gamma)\) by \(\varphi(a)=v(a)\). Then \(\varphi(0)=\infty\) and \(\varphi(a)=\infty\Rightarrow a=0\) by~\eqref{v1}, and inverse-preservation is~\eqref{v2}. If \(c\in a\boxplus b\), then~\eqref{v3}--\eqref{v4} yield \(\varphi(c)\in\varphi(a)\boxplus\varphi(b)\) in \(\T(\Gamma)\). Hence \(\varphi(a\boxplus b)\subseteq\varphi(a)\boxplus\varphi(b)\), so \(\varphi\) is a mosaic morphism.
\end{proof}

\begin{remark}
\label{rem:val-linzi}
Proposition~\ref{prop:val-as-hom} is the additive, mosaic-level counterpart of~\cite[Lemma~3.4]{Lin23compl}: there, valuations on hyperfields are hyperfield homomorphisms into \(\T(\Gamma)\).\end{remark}

\begin{definition}[Strict slice]
\label{def:strict-slice}
Write \(\cMsc/\T(\Gamma)\) for the slice category of \(\cMsc\) over \(\T(\Gamma)\): objects are unitary mosaic morphisms \(v\colon A\to\T(\Gamma)\), and a morphism from \(v\colon A\to\T(\Gamma)\) to \(w\colon B\to\T(\Gamma)\) is a unitary mosaic morphism \(f\colon A\to B\) such that \(w\circ f=v\).
\end{definition}

By Proposition~\ref{prop:val-as-hom}, valued mosaics over \(\Gamma\) are precisely the objects of \(\cMsc/\T(\Gamma)\) whose structure map reflects the unit (i.e.\ \(\varphi^{-1}(\{\infty\})=\{0\}\)). Morphisms of the slice are the \emph{value-preserving} (isometric) morphisms: those \(f\) with \(w(f(a))=v(a)\) for all \(a\). 

\begin{definition}[Lax category of valued mosaics]
\label{def:vmsc-slice}
Write \(\VMscG\) for the category of valued commutative mosaics over a fixed \(\Gamma\), with morphisms the unitary mosaic morphisms \(f\colon A\to B\) satisfying
\begin{equation}
\label{eq:slice-morph}
w\bigl(f(a)\bigr)\;\ge\;v(a)\qquad\text{for all }a\in A.
\end{equation}
(Here \(v\) and \(w\) are the valuations on domain and codomain.) Write \(\VMscG^{\mathrm{str}}\) for the wide subcategory of value-preserving morphisms. By Proposition~\ref{prop:val-as-hom}, \(\VMscG^{\mathrm{str}}\) is equivalent to the full subcategory of the slice \(\cMsc/\T(\Gamma)\) on unit-reflecting structure maps.
\end{definition}

\begin{remark}[Trivial value group]
\label{rem:triv-equiv}
Let \(\mathbf{0}=\{0\}\). Every commutative mosaic carries a unique \emph{trivial valuation} with \(v(0)=\infty\) and \(v(a)=0\) for \(a\neq 0\), and every mosaic morphism is automatically a morphism of \(\VMsc_{\mathbf{0}}\). Thus
\[
\VMsc_{\mathbf{0}}\;\simeq\;\cMsc
\]
as categories. In particular Theorem~\ref{thm:cmsc-good} yields completeness, cocompleteness, and free objects for \(\VMsc_{\mathbf{0}}\).
\end{remark}

\begin{example}[Krasner]
\label{ex:K-val}
On \(\mathbb{K}=\{0,1\}\) with \(0\boxplus x=x\boxplus 0=\{x\}\) and \(1\boxplus 1=\{0,1\}\), set \(\Gamma=\mathbf{0}\), \(v(0)=\infty\), \(v(1)=0\). This is the additive picture of the Krasner hyperfield.
\end{example}

\begin{example}[Free]
\label{ex:free-val}
\(\mathbf{F}(S)\) with the trivial valuation is an object of \(\VMsc_{\mathbf{0}}\).
\end{example}

\begin{example}[Abelian groups]
An abelian group with the trivial valuation is a valued mosaic; nontrivial valuations recover classical valued abelian groups.
\end{example}

\section{The strict slice over \(\T(\Gamma)\)}
\label{sec:slice}

For value-preserving morphisms, standard facts about slice categories~\cite{Joh02,ML71} supply all limits and colimits at once: the forgetful functor \(U\) from \(\cMsc/\T(\Gamma)\) to \(\cMsc\) creates whatever (co)limits exist downstairs. The full subcategory of unit-reflecting objects is closed under the limits and colimits created by \(U\) (the induced structure maps again reflect the unit), so the completeness and cocompleteness statements for the slice restrict to valued mosaics.

The next section will treat the larger lax category \(\VMscG\) by hand.

\begin{theorem}[Limits and colimits in the strict slice]
\label{thm:strict-slice-limits}
The forgetful functor \(U\colon\cMsc/\T(\Gamma)\to\cMsc\), \((A\xrightarrow{v}\T(\Gamma))\mapsto A\), creates all limits and all colimits that exist in \(\cMsc\). In particular, since \(\cMsc\) is complete and cocomplete~\cite{NR23}, the slice \(\cMsc/\T(\Gamma)\) is complete and cocomplete.
\end{theorem}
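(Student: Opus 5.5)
The plan is to follow the standard argument that a forgetful functor out of a slice category creates colimits and connected limits, and to handle non-connected limits by adjoining the base object \(\T(\Gamma)\) to the diagram. No property of mosaics beyond Theorem~\ref{thm:cmsc-good} is needed. The argument is purely formal in \(\cMsc\), and the only concrete input is that composites of unitary morphisms are unitary.

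\emph{Colimits.} Let \(D\colon J\to\cMsc/\T(\Gamma)\) be a diagram with \(D(j)=(A_j\xrightarrow{v_j}\T(\Gamma))\). Let \((L,\iota_j)\) be a colimit of \(UD\) in \(\cMsc\). The maps \(v_j\) form a cocone on \(UD\), because the transition maps of \(D\) commute with the structure maps. By the universal property there is a unique \(v\colon L\to\T(\Gamma)\) with \(v\iota_j=v_j\). So \((L,v)\) with the \(\iota_j\) is the unique lift of the colimit cocone.

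Next I check universality in the slice. A cocone \(g_j\colon(A_j,v_j)\to(B,w)\) induces \(g\colon L\to B\) in \(\cMsc\). Then \(wg\iota_j=wg_j=v_j=v\iota_j\), and uniqueness in \(\cMsc\) gives \(wg=v\), so \(g\) is a slice morphism. Conversely, any cocone over \(UD\) that lifts must have structure map \(v\), which gives the uniqueness needed for creation.

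\emph{Limits.} For connected \(J\), let \((P,\pi_j)\) be a limit of \(UD\) in \(\cMsc\). The composites \(v_j\pi_j\) are all equal: along each arrow \(j\to k\) we have \(v_k D(j\to k)=v_j\), and connectedness propagates the equality across \(J\). This common map is the unique structure map making the \(\pi_j\) slice morphisms. The universal property then transfers verbatim: for a cone from \((Q,u)\), the induced map \(q\colon Q\to P\) satisfies \(v_{j}\pi_{j}q=u\).

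For an arbitrary \(J\), I will use the cone \(J^{\triangleright}\) with apex \(\ast\). Extend \(UD\) to \(\widehat D\colon J^{\triangleright}\to\cMsc\) by sending \(\ast\mapsto\T(\Gamma)\) and \(j\to\ast\) to \(v_j\). Since \(J^{\triangleright}\) is connected, \(\lim\widehat D\) exists in \(\cMsc\), and its projection to \(\T(\Gamma)\) is the structure map. One checks that this is the limit of \(D\) in the slice. It is the wide pullback of the \(v_j\); for empty \(J\) it is \(\mathrm{id}_{\T(\Gamma)}\). In this sense the limit in the slice is created from a limit existing in \(\cMsc\). The ``in particular'' clause then follows from completeness and cocompleteness of \(\cMsc\) (Theorem~\ref{thm:cmsc-good}).

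The main obstacle is the wording for non-connected limits. Literally, \(U\) does not carry the terminal object \(\mathrm{id}_{\T(\Gamma)}\) of the slice to the terminal object \(\{0\}\) of \(\cMsc\). More generally, a product in the slice is a fibre product over \(\T(\Gamma)\), not the product in \(\cMsc\). So I will state precisely that \(U\) creates colimits and connected limits, and that all other limits are computed by a limit in \(\cMsc\) via the \(J^{\triangleright}\) construction. The completeness conclusion is unaffected by this.

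Finally, I will verify that unit-reflection is preserved. For colimits this needs the explicit Nakamura--Reyes constructions, since a quotient could identify a nonzero element with \(0\). I will check this separately from the formal argument.
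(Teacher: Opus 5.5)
Your argument is correct. For colimits it is exactly the paper's argument: the \(v_j\) form a cocone under \(UD\) and induce the structure map on the \(\cMsc\)-colimit, and a mediating map lies over \(\T(\Gamma)\) because colimit legs are jointly epic.

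For limits you depart from the paper, and your version is the one that is actually true. The paper asserts that \(U\) creates \emph{all} limits, with the slice limit being the \(\cMsc\)-limit \(L\) ``equipped with the unique mosaic morphism \(L\to\T(\Gamma)\) induced by the \(v_i\)''. But any structure map making the projections \(\pi_i\) lie over \(\T(\Gamma)\) must equal every composite \(v_i\pi_i\). So a lift exists only when these composites agree, which connectedness guarantees and which fails in general.

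Your examples settle this:
\begin{itemize}
\item \(\{0\}\to\T(\Gamma)\) is not terminal in the slice. An object \((A,v)\) maps to it only if \(v\) is constantly \(\infty\), which excludes \(\mathrm{id}_{\T(\Gamma)}\).
\item On \(A\times B\), a lift would need \(v(a)=w(b)\) for all \(a,b\). Taking \(b=0\) and then \(a=0\) forces \(v\equiv w\equiv\infty\).
\end{itemize}
So the first sentence of the theorem is false for limits as literally stated. What holds is that \(U\) creates colimits and connected limits. General limits are computed as the \(\cMsc\)-limit of your \(\widehat D\) on \(J^{\triangleright}\), i.e.\ wide pullbacks over \(\T(\Gamma)\).

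You can make ``created'' precise as follows. Note that \(\widehat D=U D^{\triangleright}\), where \(D^{\triangleright}\) sends the apex to the terminal object \(\mathrm{id}_{\T(\Gamma)}\). This diagram is connected, so \(U\) creates its limit, and since cones over \(D^{\triangleright}\) and over \(D\) correspond bijectively, that limit is also a limit of \(D\). The existence of \(\lim\widehat D\) comes from completeness of \(\cMsc\), not from connectedness, as your wording suggests.

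The completeness and cocompleteness conclusion of the theorem therefore survives, but only through your route. The paper's appeal to the standard fact is shorter, but it misstates that fact.

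Your closing paragraph on unit-reflection is not needed here. The objects of the slice are arbitrary unitary morphisms to \(\T(\Gamma)\), so unit-reflection only bears on the paper's side remark about restricting to valued mosaics. Even there, the danger you name does not arise. If a colimit leg sends \(b\) to \(0\), then \(v_j(b)=v_L(0)=\infty\), so \(b=0\) by unit-reflection of \(v_j\). What the explicit Nakamura--Reyes constructions would have to supply is joint surjectivity of the colimit legs, to rule out new elements of value \(\infty\). For your wide pullbacks, unit-reflection is immediate: a compatible family with common value \(\infty\) has all components equal to \(0\).
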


\begin{proof}
The creation of limits by the forgetful functor from a slice is standard; see e.g.~\cite[A1.2]{Joh02} or~\cite[V.6]{ML71}. Concretely, a limit of objects \(A_i\xrightarrow{v_i}\T(\Gamma)\) is the limit \(L\) of the \(A_i\) in \(\cMsc\), equipped with the unique mosaic morphism \(L\to\T(\Gamma)\) induced by the \(v_i\).

For colimits, let \(D\colon J\to\cMsc/\T(\Gamma)\) be a small diagram, write \(A_j=U(D(j))\) and \(v_j\colon A_j\to\T(\Gamma)\) for the structure maps, and let \(\lambda_j\colon A_j\to L\) be a colimit cocone of the underlying diagram \(UD\) in \(\cMsc\). The family \((v_j)_j\) is a cocone under \(UD\) with vertex \(\T(\Gamma)\), because every morphism of \(D\) lies over \(\T(\Gamma)\). Hence there is a unique mosaic morphism \(v_L\colon L\to\T(\Gamma)\) with \(v_L\circ\lambda_j=v_j\) for all \(j\). We claim that the \(\lambda_j\), viewed as morphisms \(D(j)\to(L\xrightarrow{v_L}\T(\Gamma))\) in the slice, form a colimit cocone.

Given a cocone in the slice from \(D\) to an object \(E\xrightarrow{w}\T(\Gamma)\), with legs \(h_j\colon A_j\to E\), the \(h_j\) form a cocone under \(UD\) in \(\cMsc\), so there is a unique mosaic morphism \(h\colon L\to E\) with \(h\circ\lambda_j=h_j\). It remains to check that \(h\) lies over \(\T(\Gamma)\), i.e.\ \(w\circ h=v_L\). For each \(j\),
\[
w\circ h\circ\lambda_j=w\circ h_j=v_j=v_L\circ\lambda_j.
\]
The legs of a colimit cocone are jointly epic, so \(w\circ h=v_L\). Uniqueness of \(h\) in the slice follows from uniqueness in \(\cMsc\).
\end{proof}

\begin{corollary}[Coequalizers in the strict slice]
\label{cor:strict-slice-coeq}
Let \(f,g\colon(A\xrightarrow{v}\T(\Gamma))\rightrightarrows(B\xrightarrow{w}\T(\Gamma))\) be a parallel pair in \(\cMsc/\T(\Gamma)\), and let \(\pi\colon B\to Q\) be their coequalizer in \(\cMsc\)~\cite{NR23}. There is a unique mosaic morphism \(u\colon Q\to\T(\Gamma)\) with \(u\circ\pi=w\), and \(\pi\) is a coequalizer of \(f,g\) in \(\cMsc/\T(\Gamma)\). In particular every short coequalizer of~\cite{NR23} lifts uniquely to a value-preserving coequalizer of valued mosaics.
\end{corollary}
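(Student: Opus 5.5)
This is a direct specialisation of Theorem~\ref{thm:strict-slice-limits} to the diagram shape of a parallel pair. The plan has three steps: build \(u\), check that \(\pi\) becomes a slice morphism, then verify the universal property in the slice.

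First, I construct \(u\). Since \(f\) and \(g\) are morphisms of the slice, we have \(w\circ f=v=w\circ g\). So \(w\colon B\to\T(\Gamma)\) coequalizes \(f\) and \(g\) in \(\cMsc\). By the universal property of the coequalizer \(\pi\colon B\to Q\) in \(\cMsc\)~\cite{NR23}, there is a unique unitary mosaic morphism \(u\colon Q\to\T(\Gamma)\) with \(u\circ\pi=w\). This gives existence and uniqueness of \(u\) at once. With this choice, \(\pi\) is a morphism \((B\xrightarrow{w}\T(\Gamma))\to(Q\xrightarrow{u}\T(\Gamma))\) in the slice, and \(\pi\circ f=\pi\circ g\) still holds there.

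Second, I verify the universal property. Let \(h\colon(B\xrightarrow{w}\T(\Gamma))\to(E\xrightarrow{e}\T(\Gamma))\) be a slice morphism with \(h\circ f=h\circ g\). In \(\cMsc\) this yields a unique \(k\colon Q\to E\) with \(k\circ\pi=h\). To see that \(k\) lies over \(\T(\Gamma)\), compute
\[
e\circ k\circ\pi=e\circ h=w=u\circ\pi .
\]
Because \(\pi\) is an epimorphism (it is a coequalizer), this gives \(e\circ k=u\). Uniqueness of \(k\) in the slice is inherited from uniqueness in \(\cMsc\). All of this can also be cited directly from Theorem~\ref{thm:strict-slice-limits}, since the parallel pair is a small diagram.

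Third, I address the final sentence about valued mosaics. Here the concern is whether the lifted coequalizer stays among unit-reflecting objects, i.e.\ whether \(u^{-1}(\infty)=\{0_Q\}\). This is the only nonformal point, and I expect it to be the main obstacle. I would first use the concrete description of short coequalizers in~\cite{NR23}, where \(\pi\) is surjective with \(Q\) a quotient of \(B\). Surjectivity gives that every \(q\in Q\) has the form \(\pi(b)\), so \(u(q)=w(b)\). Then \(u(q)=\infty\) forces \(b=0\) by~\eqref{v1}, and hence \(q=\pi(0)=0\). If surjectivity of \(\pi\) is not available in the generality stated, I would restrict the claim to those coequalizers of~\cite{NR23} that are surjective (the ``short'' ones), which is what the corollary asserts.
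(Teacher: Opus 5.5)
Your proposal is correct and follows the paper's proof. As there, \(w\circ f=v=w\circ g\) induces \(u\), and the slice universal property comes from the one in \(\cMsc\) together with epicity of \(\pi\), as in Theorem~\ref{thm:strict-slice-limits}. Your third step, unit-reflection of \(u\) via surjectivity of \(\pi\), is a check the paper only asserts in the preamble to Section~\ref{sec:slice}, and your hedge is unnecessary: every coequalizer in \(\cMsc\) is surjective, because \(\pi(B)\) with the restricted hyperoperation is a sub-mosaic through which \(\pi\) factors, and the universal property then forces \(\pi(B)=Q\).
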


\begin{proof}
Specialise Theorem~\ref{thm:strict-slice-limits} to coequalizer diagrams. Explicitly: \(w\circ f=v=w\circ g\), so \(w\) coequalizes \(f,g\) in \(\cMsc\) and induces \(u\colon Q\to\T(\Gamma)\). The universal property in the slice is the argument of Theorem~\ref{thm:strict-slice-limits} for \(J\) the parallel-pair shape; joint epicity of \(\pi\) (as a coequalizer leg) ensures that mediating morphisms automatically preserve values.
\end{proof}

\begin{remark}[Krasner completion in the slice]
\label{rem:krasner-slice}
In~\cite{Lin23compl}, Krasner's description of the completion of a valued field as a projective limit of hyperfields is reinterpreted as a limit cone in the slice \(\mathtt{vHyp}/\T(\Gamma)\). The present section places the additive substrate of that story on the same footing: valuations are morphisms to \(\T(\Gamma)\), and the slice over \(\T(\Gamma)\) is the ambient category in which value-preserving diagrams and their limits are computed. Extending the completion construction from hyperfields to objects of \(\VMscG\) or \(\cMsc/\T(\Gamma)\) is a natural problem suggested by this parallel (cf.\ Section~\ref{sec:outlook}).
\end{remark}

\section{Limits and colimits in the lax category}
\label{sec:lax}

The forgetful functor \(\VMscG\to\cMsc\) does \emph{not} create products in the slice sense: morphisms of \(\VMscG\) are inequality morphisms~\eqref{eq:slice-morph}, so the correct finite product uses the cartesian product of mosaics with the \(\min\)-valuation (Theorem~\ref{thm:vmsc-prod}). Coproducts, by contrast, are the Nakamura--Reyes wedges with summandwise valuation, and work for both lax and strict morphisms. Coequalizers in \(\VMscG\) for nontrivial \(\Gamma\) remain open; the strict slice already settled coequalizers for value-preserving morphisms (Corollary~\ref{cor:strict-slice-coeq}).

\subsection{Finite limits}

\begin{proposition}[Zero object]
\label{prop:vmsc-zero}
The one-element mosaic \(\mathbf{1}=\{0\}\) with \(v(0)=\infty\) is a zero object of \(\VMscG\).
\end{proposition}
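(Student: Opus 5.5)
To show that \(\mathbf{1}\) is a zero object of \(\VMscG\), I will check three things: that \(\mathbf{1}\) is an object, that it is terminal, and that it is initial. In each direction I produce the unique candidate map and verify the morphism conditions: unitarity, the hypermagma inclusion \(\mu(x\boxplus y)\subseteq\mu(x)\boxplus\mu(y)\), and the lax inequality~\eqref{eq:slice-morph}.

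\emph{\(\mathbf{1}\) is an object.} Its hyperoperation is forced by the identity law: \(0\boxplus 0=\{0\}\), with inverse \(-0=0\). Reversibility and commutativity hold trivially. The function \(v(0)=\infty\) satisfies~\eqref{v1} and~\eqref{v2} at once. It satisfies~\eqref{v3} because \(\infty\ge\infty\), and~\eqref{v4} holds vacuously.

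\emph{Terminality.} For any valued mosaic \((A,v)\), there is exactly one function \(A\to\{0\}\), the constant map \(t\). It is unitary, since \(t(0)=0\) and \(t(-a)=0=-t(a)\). It is a hypermagma morphism: the image of any sum \(a\boxplus b\), empty or not, is contained in \(\{0\}=t(a)\boxplus t(b)\). The lax condition holds because \(w(t(a))=\infty\ge v(a)\).

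\emph{Initiality.} For any \((B,w)\), a unitary morphism must send \(0\mapsto 0_B\), so this is the unique candidate. It is a hypermagma morphism because \(\{0_B\}\subseteq 0_B\boxplus 0_B=\{0_B\}\) by the identity law. The lax condition holds because \(w(0_B)=\infty\ge\infty\) by~\eqref{v1}.

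The only point needing care is the direction of the inequality in the terminal map. It works precisely because \(\infty\) is the top element of \(\Gamma_\infty\), so collapsing every element to \(0\) can only increase values, which is what~\eqref{eq:slice-morph} permits. Note that this map is not value-preserving, so \(\mathbf{1}\) is terminal in \(\VMscG\) but not in \(\VMscG^{\mathrm{str}}\).

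The terminal condition also uses that images of possibly empty sums are contained in \(\{0\}\). Beyond these two points there is no real obstacle.
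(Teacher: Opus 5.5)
Your proof is correct and follows the paper's approach: the paper also takes the unique maps into and out of \(\mathbf{1}\) and notes that they satisfy the lax inequality because \(v_{\mathbf{1}}(0)=\infty\) and \(w(0_B)=\infty\), while you additionally verify the object axioms and the mosaic-morphism conditions that the paper takes for granted from \(\cMsc\). Your side remark that the terminal map is not value-preserving, so \(\mathbf{1}\) is not terminal for value-preserving morphisms, is also correct.
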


\begin{proof}
Unique mosaic maps into and out of \(\mathbf{1}\) are valued because \(v_{\mathbf{1}}(0)=\infty\) and \(w(0_B)=\infty\).
\end{proof}

\begin{theorem}[Finite products]
\label{thm:vmsc-prod}
Let \((A_1,v_1),\ldots,(A_n,v_n)\) be objects of \(\VMscG\) (\(n\ge 0\)). On the \(\cMsc\)-product \(P=\prod_k A_k\) (with \(P=\mathbf{1}\) if \(n=0\)) set
\[
u(a_1,\ldots,a_n)\;:=\;\min\bigl\{v_1(a_1),\ldots,v_n(a_n)\bigr\}.
\]
Then \((P,u)\) is the product of the \((A_k,v_k)\) in \(\VMscG\).
\end{theorem}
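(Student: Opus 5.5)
We first need to know what the \(\cMsc\)-product is. Following~\cite{NR23}, we take \(P=\prod_k A_k\) to be the cartesian product of the underlying sets, with componentwise unit \(0=(0,\dots,0)\), componentwise inverse \(-(a_k)_k=(-a_k)_k\), and componentwise hyperoperation \((a_k)\boxplus(b_k)=\prod_k(a_k\boxplus b_k)\). This set is empty as soon as one factor is empty. The projections \(p_k\) are unitary morphisms, and for any family of mosaic morphisms \(f_k\colon B\to A_k\) the tuple map \(f=(f_1,\dots,f_n)\) is the unique mosaic morphism with \(p_k f=f_k\). We would cite this universal property from \(\cMsc\) rather than reprove it. For \(n=0\), \(P=\mathbf{1}\) with \(u(0)=\min\emptyset=\infty\), and this is terminal by Proposition~\ref{prop:vmsc-zero}. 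So we may assume \(n\ge1\).

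The first step is to check that \(u\) satisfies \eqref{v1}--\eqref{v4}.
\begin{enumerate}[label=(\roman*)]
\item For \eqref{v1}: \(u(a)=\infty\) if and only if every \(v_k(a_k)=\infty\), if and only if every \(a_k=0\).
\item \eqref{v2} holds componentwise.
\item For \eqref{v3}: if \(c\in a\boxplus b\), then \(c_k\in a_k\boxplus b_k\) for all \(k\). Hence \(v_k(c_k)\ge\min(v_k(a_k),v_k(b_k))\ge\min(u(a),u(b))\), and taking the minimum over \(k\) gives the claim.
\end{enumerate}

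The main obstacle is \eqref{v4}, because the min-valuation need not inherit strict equality. Suppose \(u(a)<u(b)\), and let \(k\) be an index with \(v_k(a_k)=u(a)\). Then \(v_k(b_k)\ge u(b)>v_k(a_k)\), so \eqref{v4} in \(A_k\) gives \(v_k(c_k)=v_k(a_k)=u(a)\). Consequently \(u(c)\le u(a)\), and combined with \eqref{v3} this yields \(u(c)=u(a)=\min(u(a),u(b))\). The only delicate choice is picking the coordinate that realises the minimum for the element of smaller value, and this argument resolves the step.

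Finally, we verify the universal property in \(\VMscG\). Each projection satisfies \(v_k(p_k(a))=v_k(a_k)\ge u(a)\), so the \(p_k\) are lax morphisms. Now take lax morphisms \(f_k\colon(B,w)\to(A_k,v_k)\). The induced mosaic morphism \(f=(f_k)_k\) satisfies \(u(f(b))=\min_k v_k(f_k(b))\ge w(b)\), since each term is at least \(w(b)\). So \(f\) is a morphism of \(\VMscG\). Uniqueness follows because \(\VMscG\to\cMsc\) is faithful (morphisms are mosaic maps with an extra property), so uniqueness already holds in \(\cMsc\). This also explains why the minimum is the right choice: it is the largest valuation on \(P\) that makes every projection lax.
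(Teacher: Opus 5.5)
Your proof is correct and takes essentially the same route as the paper: you verify (V1)--(V4) for the \(\min\)-valuation on the componentwise product, then check that the projections and the \(\cMsc\)-mediator are lax. The differences are cosmetic: you treat all \(n\ge 1\) at once rather than proving \(n=2\) and inducting, and your (V4) step (apply (V4) in a coordinate realising \(u(a)\) to get \(u(c)\le u(a)\), then invoke (V3)) streamlines the paper's two-case argument.
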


\begin{proof}
The case \(n=0\) is Proposition~\ref{prop:vmsc-zero}; \(n=1\) is trivial. We first treat \(n=2\), writing \(A,B,v,w\) and \(\boxplus\) for the componentwise multioperation.

\eqref{v1}: \(u(a,b)=\infty\) if and only if \(v(a)=w(b)=\infty\), if and only if \(a=b=0\) by~\eqref{v1} in the factors. \eqref{v2} is clear. If \((c,d)\in(a,b)\boxplus(a',b')\), then \(v(c)\ge\min(v(a),v(a'))\) and \(w(d)\ge\min(w(b),w(b'))\), so
\begin{align*}
u(c,d)
&=\min\bigl(v(c),w(d)\bigr)\\
&\ge\min\bigl(\min(v(a),v(a')),\,\min(w(b),w(b'))\bigr)\\
&=\min\bigl(v(a),v(a'),w(b),w(b')\bigr).
\end{align*}
Since \(u(a,b)=\min(v(a),w(b))\) and \(u(a',b')=\min(v(a'),w(b'))\), the right-hand side equals \(\min\bigl(u(a,b),u(a',b')\bigr)\), which is~\eqref{v3}.

For~\eqref{v4}, assume \(u(a,b)<u(a',b')\) and \((c,d)\) in the product. Set \(\mu=\min(v(a),w(b))\) and \(\mu'=\min(v(a'),w(b'))\), so \(\mu<\mu'\).

If \(v(a)=\mu\), then \(v(a)\le w(b)\) and \(v(a)=\mu<\mu'\le v(a')\), so \(v(a)\neq v(a')\). Axiom~\eqref{v4} in \(A\) yields \(v(c)=v(a)=\mu\). Moreover \(w(b')\ge\mu'>\mu\) and \(w(b)\ge\mu\), so \(w(d)\ge\min(w(b),w(b'))\ge\mu\), hence \(u(c,d)=\min(v(c),w(d))=\mu\).

If \(w(b)=\mu\), the symmetric argument with~\eqref{v4} in \(B\) gives \(w(d)=\mu\) and \(u(c,d)=\mu\).

Projections are valued because \(v(\pi_A(a,b))=v(a)\ge\min(v(a),w(b))=u(a,b)\). Given \(f\colon(X,t)\to(A,v)\) and \(g\colon(X,t)\to(B,w)\), the mosaic mediator \(\langle f,g\rangle\) satisfies \(u(f(x),g(x))=\min(v(f(x)),w(g(x)))\ge t(x)\).

For \(n>2\) we proceed by induction on \(n\). The inductive hypothesis supplies a product
\((P',u')\) of \((A_1,v_1),\ldots,(A_{n-1},v_{n-1})\) with
\(u'(a_1,\ldots,a_{n-1})=\min\{v_1(a_1),\ldots,v_{n-1}(a_{n-1})\}\).
The binary case already proved yields a product of \((P',u')\) with \((A_n,v_n)\) whose underlying mosaic is the \(n\)-fold \(\cMsc\)-product and whose valuation is
\[
\min\bigl(u'(a_1,\ldots,a_{n-1}),\,v_n(a_n)\bigr)
=\min\{v_1(a_1),\ldots,v_n(a_n)\}.
\]
The universal property of an \(n\)-fold product follows by associating the binary product universal properties in \(\VMscG\).
\end{proof}

\begin{remark}[Infinite products]
\label{rem:inf-prod}
For infinite families the same formula requires arbitrary infima in \(\Gamma_\infty\), hence Dedekind completeness of \(\Gamma\) (e.g.\ \(\Gamma=\mathbb{R}\)). Finite products exist for every \(\Gamma\).
\end{remark}

\begin{theorem}[Equalizers]
\label{thm:vmsc-eq}
Let \(f,g\colon(A,v)\rightrightarrows(B,w)\) in \(\VMscG\). The equalizer \(E=\{a\in A\mid f(a)=g(a)\}\) in \(\cMsc\), with \(v_E=v|_E\), is an equalizer in \(\VMscG\).
\end{theorem}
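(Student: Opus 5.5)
The plan has three stages: recall the $\cMsc$-equalizer, check that the restricted valuation $v|_E$ satisfies the axioms, and then verify the universal property in $\VMscG$.

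\textbf{The equalizer in $\cMsc$.} By \cite{NR23}, the equalizer of $f,g$ in $\cMsc$ is the subset $E=\{a\in A\mid f(a)=g(a)\}$. It carries the restricted hyperoperation $a\boxplus_E b:=(a\boxplus b)\cap E$, with unit $0$ and the inherited involution. I will take this description as given and only recall why it is a mosaic:
\begin{itemize}
\item $0\in E$ because $f,g$ are unitary.
\item $E$ is closed under negation because $f(-a)=-f(a)=-g(a)=g(-a)$.
\item The identity law holds since $0\boxplus a=\{a\}\subseteq E$.
\item Reversibility restricts because all three elements $a,b,c$ in the biconditional lie in $E$.
\end{itemize}

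\textbf{Valuation axioms for $v_E$.} Next I check that $v_E=v|_E$ satisfies \eqref{v1}--\eqref{v4}. These axioms are universal statements about elements and about membership $c\in a\boxplus b$. Since $c\in a\boxplus_E b$ implies $c\in a\boxplus b$ in $A$, each axiom for $E$ is inherited directly from the corresponding axiom for $(A,v)$, and \eqref{v1} and \eqref{v2} restrict trivially. The inclusion $\iota\colon E\to A$ is a unitary mosaic morphism. It satisfies $v(\iota(a))=v_E(a)$, so it is a (strict, hence lax) morphism in $\VMscG$, and $f\iota=g\iota$.

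\textbf{Universal property.} Let $h\colon(X,t)\to(A,v)$ be a $\VMscG$-morphism with $fh=gh$. The universal property in $\cMsc$ gives a unique unitary mosaic morphism $\bar h\colon X\to E$ with $\iota\bar h=h$; concretely, $\bar h$ is $h$ corestricted to $E$. It remains to check \eqref{eq:slice-morph}, and this follows from
\[
v_E(\bar h(x))=v(h(x))\ge t(x).
\]
Uniqueness in $\VMscG$ follows from uniqueness in $\cMsc$, since $\VMscG\to\cMsc$ is faithful.

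I do not expect a serious obstacle. The only point needing care is that the $\cMsc$-equalizer really has the restricted hyperoperation $(a\boxplus b)\cap E$ as its structure. This matters in one place: the corestriction $\bar h$ satisfies $\bar h(x\boxplus y)\subseteq h(x)\boxplus h(y)$, and this lands in $E$ because the image of $h$ lies in $E$. Once that is in place, the valuation condition is simply a pointwise inequality inherited from $h$.
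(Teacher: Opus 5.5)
Your proposal is correct and follows essentially the same route as the paper. You take the $\cMsc$-equalizer $E$ with the restricted hyperoperation $(a\boxplus_A b)\cap E$, inherit (V1)--(V4) because $c\in a\boxplus_E b$ implies $c\in a\boxplus_A b$, and note that the corestricted mediator satisfies $v_E(\bar h(x))=v(h(x))\ge t(x)$. Your extra checks (closure of $E$ under negation, and that the corestriction is a hypermagma morphism into $E$) only spell out details the paper takes from Nakamura--Reyes.
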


\begin{proof}
By~\cite{NR23}, \(E\) with multioperation \(a\boxplus_E b=(a\boxplus_A b)\cap E\) is an equalizer of \(f,g\) in \(\cMsc\), and \(0\in E\). Axioms~\eqref{v1}--\eqref{v4} for \(v_E=v|_E\) follow from those for \(v\): if \(c\in a\boxplus_E b\), then \(c\in a\boxplus_A b\), so the same numerical inequalities hold. The inclusion \(\iota\colon E\hookrightarrow A\) is isometric, hence a morphism of \(\VMscG\). If \(h\colon(X,t)\to(A,v)\) equalizes \(f,g\) in \(\VMscG\), the unique mosaic factorization \(h'\colon X\to E\) satisfies \(v_E(h'(x))=v(h(x))\ge t(x)\).
\end{proof}

\begin{theorem}[Finite completeness]
\label{thm:vmsc-finitely-complete}
For every totally ordered abelian group \(\Gamma\), \(\VMscG\) is finitely complete.
\end{theorem}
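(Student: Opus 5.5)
The plan is to invoke the standard categorical fact that a category with a terminal object, binary products, and equalizers has all finite limits; see e.g.~\cite[V.2]{ML71}. Each ingredient is already available for arbitrary \(\Gamma\):
\begin{itemize}
\item the terminal object is the zero object \(\mathbf{1}\) of Proposition~\ref{prop:vmsc-zero}, equivalently the case \(n=0\) of Theorem~\ref{thm:vmsc-prod};
\item binary (indeed all finite) products are given by Theorem~\ref{thm:vmsc-prod}, with the \(\min\)-valuation on the \(\cMsc\)-product;
\item equalizers are given by Theorem~\ref{thm:vmsc-eq}, with the restricted valuation on the \(\cMsc\)-equalizer.
\end{itemize}
Since none of these constructions needs infima beyond finite minima, no completeness hypothesis on \(\Gamma\) enters, in contrast to Remark~\ref{rem:inf-prod}.

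For concreteness I would also spell out how pullbacks arise, since they are the most frequently used finite limits. Given \(f\colon(A,v)\to(C,x)\) and \(g\colon(B,w)\to(C,x)\), take the equalizer of \(f\pi_A\) and \(g\pi_B\) on \((A\times B,u)\). This yields the set \(\{(a,b)\mid f(a)=g(b)\}\), with the induced multioperation and valuation \(\min(v(a),w(b))\). The universal property follows by composing the two universal properties already proved. For a mediating pair \(h\colon X\to A\) and \(k\colon X\to B\), the inequality \(\min(v(h(x)),w(k(x)))\ge t(x)\) is exactly what the product and equalizer theorems already verify.

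There is no real obstacle here. The only point worth checking is that the general construction of an arbitrary finite limit, as an equalizer of two maps between finite products, is performed entirely inside \(\VMscG\). In other words, the parallel maps used in that construction must be lax morphisms. They are: they are built from projections and the legs of the diagram, all of which are morphisms of \(\VMscG\), so the abstract theorem applies verbatim.
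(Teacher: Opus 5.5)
Your proposal is correct and matches the paper's proof: it combines the terminal object of Proposition~\ref{prop:vmsc-zero}, the finite products of Theorem~\ref{thm:vmsc-prod} and the equalizers of Theorem~\ref{thm:vmsc-eq} with the standard fact that these yield all finite limits. Your explicit pullback description also agrees with Corollary~\ref{cor:vmsc-pullback}.
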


\begin{proof}
Terminal object (Proposition~\ref{prop:vmsc-zero}), finite products (Theorem~\ref{thm:vmsc-prod}), and equalizers (Theorem~\ref{thm:vmsc-eq}). Finite products and equalizers imply all finite limits.
\end{proof}

\begin{corollary}[Pullbacks]
\label{cor:vmsc-pullback}
The pullback of \(f\colon(A,v)\to(C,u)\) and \(g\colon(B,w)\to(C,u)\) is the equalizer of \(f\pi_A,g\pi_B\) on \(A\times B\) with valuation \(\min(v,w)\).
\end{corollary}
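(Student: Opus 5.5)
The plan is the standard construction of pullbacks from finite products and equalizers, specialised to the explicit descriptions in Theorem~\ref{thm:vmsc-prod} and Theorem~\ref{thm:vmsc-eq}. Let \((A\times B,\min(v,w))\) be the binary product in \(\VMscG\) given by Theorem~\ref{thm:vmsc-prod}, with projections \(\pi_A,\pi_B\). The composites \(f\pi_A,g\pi_B\colon(A\times B,\min(v,w))\rightrightarrows(C,u)\) are morphisms of \(\VMscG\), since composites of unitary morphisms satisfying~\eqref{eq:slice-morph} again satisfy~\eqref{eq:slice-morph}. Let \(E=\{(a,b)\mid f(a)=g(b)\}\) be their equalizer, as in Theorem~\ref{thm:vmsc-eq}. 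It carries the multioperation \(((a,b)\boxplus(a',b'))\cap E\) and the valuation \(\min(v,w)|_E\). The claim is that \(E\), with legs \(p_A=\pi_A\iota\) and \(p_B=\pi_B\iota\), is a pullback of \(f\) and \(g\).

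First I will check that the square commutes: \(f p_A=f\pi_A\iota=g\pi_B\iota=g p_B\), by the equalizing property of \(\iota\). The legs lie in \(\VMscG\) because \(\iota\) is isometric and the projections are valued.

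Next I will verify the universal property. Let \((X,t)\) be an object with morphisms \(h\colon X\to A\) and \(k\colon X\to B\) in \(\VMscG\) such that \(fh=gk\). Theorem~\ref{thm:vmsc-prod} gives a unique \(\langle h,k\rangle\colon(X,t)\to(A\times B,\min(v,w))\) in \(\VMscG\). This map equalizes \(f\pi_A\) and \(g\pi_B\), because \(f\pi_A\langle h,k\rangle=fh=gk=g\pi_B\langle h,k\rangle\). Theorem~\ref{thm:vmsc-eq} then gives a unique factorization through \(E\), which is valued. Uniqueness of the mediating map follows from joint monicity of \((\pi_A,\pi_B)\) together with monicity of \(\iota\), or, concretely, because the underlying function into \(A\times B\) is forced.

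There is no real obstacle here. The only point needing care is that the valuation on \(E\) is the restriction of \(\min(v,w)\), so that the formula in the statement is exactly what the product-then-equalizer construction produces.
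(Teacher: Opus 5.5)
Your proposal is correct and takes the same route as the paper, which gives no separate proof and leaves the corollary implicit in Theorem~\ref{thm:vmsc-finitely-complete}. You build the pullback in the standard way from the binary product of Theorem~\ref{thm:vmsc-prod} and the equalizer of Theorem~\ref{thm:vmsc-eq}, and you also check the valued legs, the universal property and uniqueness, which the paper does not write out.
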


\subsection{The trivial slice}

\begin{corollary}
\label{cor:triv-cocomplete}
The category \(\VMsc_{\mathbf{0}}\) is complete and cocomplete, and the forgetful functor to \(\Set_\bullet\) has a left adjoint.
\end{corollary}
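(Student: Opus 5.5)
The plan is to transport everything along the isomorphism of categories \(\VMsc_{\mathbf{0}}\simeq\cMsc\) from Remark~\ref{rem:triv-equiv}. First I will confirm that this is a genuine isomorphism, not merely an equivalence. On objects, a valuation \(v\colon A\to\mathbf{0}_\infty=\{0,\infty\}\) is forced by~\eqref{v1} to be the trivial valuation. That function satisfies~\eqref{v2}--\eqref{v4}: \eqref{v3} holds because any value is \(\ge 0\) unless both inputs are \(\infty\); when \(a=b=0\), the identity law gives \(c=0\). For~\eqref{v4}, if exactly one of \(a,b\) is \(0\), the identity law forces \(c\) to be the other. On morphisms, a unitary morphism \(f\) between trivially valued mosaics always satisfies~\eqref{eq:slice-morph}: if \(a\neq0\), then \(v(a)=0\) is the least finite value, and \(w(f(a))\ge 0\) always; if \(a=0\), then \(f(0)=0\). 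So the forgetful functor \(\VMsc_{\mathbf{0}}\to\cMsc\) is bijective on objects and fully faithful, hence an isomorphism.

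Completeness and cocompleteness then follow at once from Theorem~\ref{thm:cmsc-good}, since an isomorphism of categories preserves and reflects all limits and colimits. Concretely, a limit or colimit in \(\cMsc\) with the trivial valuation is a limit or colimit in \(\VMsc_{\mathbf{0}}\). For products this is consistent with Theorem~\ref{thm:vmsc-prod}, since \(\min\) of trivial valuations is trivial, and infinite infima cause no problem here because \(\{0,\infty\}\) is complete. This resolves Remark~\ref{rem:inf-prod} in this case.

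For the left adjoint, the forgetful functor \(\VMsc_{\mathbf{0}}\to\Set_\bullet\) factors as the isomorphism to \(\cMsc\) followed by the forgetful functor \(\cMsc\to\Set_\bullet\). The latter has left adjoint \(\mathbf{F}\) by Proposition~\ref{Fs}. Composing adjoints, the left adjoint sends \(S\) to \(\mathbf{F}(S)\) with its trivial valuation (Example~\ref{ex:free-val}).

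The only step needing care is the morphism check in the first paragraph, in particular that \(0\) is the minimum finite value in \(\mathbf{0}\), so that the lax condition is vacuous. Everything else is formal.
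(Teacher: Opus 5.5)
Your proposal is correct and follows the paper's proof, which simply cites Remark~\ref{rem:triv-equiv}, Theorem~\ref{thm:cmsc-good}, and Proposition~\ref{Fs} to transport completeness, cocompleteness, and the free functor \(\mathbf{F}\) along \(\VMsc_{\mathbf{0}}\simeq\cMsc\). You additionally verify what the remark only asserts: that~\eqref{v1} forces the trivial valuation, that~\eqref{v3}--\eqref{v4} hold, and that~\eqref{eq:slice-morph} is vacuous, so the equivalence is an isomorphism; the one check you leave unstated is~\eqref{v2}, which follows from \(-0=0\) and the involutivity of \(a\mapsto -a\).
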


\begin{proof}
Remark~\ref{rem:triv-equiv} and Theorems~\ref{thm:cmsc-good} and~\ref{Fs}.
\end{proof}

\subsection{Explicit coproducts after Nakamura--Reyes}

Nakamura--Reyes construct coproducts of (commutative) mosaics as coproducts in \(\mathtt{uHMag}\), which in turn are \emph{wedge sums} of the underlying pointed sets~\cite[Theorem~3.5, Theorem~4.1]{NR23}. We recall the construction and equip it with an explicit valuation.

\begin{definition}[Wedge coproduct of mosaics]
\label{def:wedge}
Let \((A_i)_{i\in I}\) be a small family of commutative mosaics, with units \(0_i\). Their coproduct \(C=\bigvee_{i\in I}A_i\) in \(\cMsc\) may be realized as follows.
\begin{itemize}
\item If \(I=\emptyset\), take \(C\) to be the one-element mosaic \(\mathbf{1}=\{0\}\) (the initial object of \(\cMsc\)).
\item If \(I\neq\emptyset\), as a set \(C\) is the wedge sum of the pointed sets \((A_i,0_i)\): the quotient of the disjoint union \(\bigsqcup_i A_i\) obtained by identifying all the units \(0_i\) to a single basepoint \(0\). Write \(\iota_i\colon A_i\to C\) for the canonical inclusions, so \(\iota_i(0_i)=0\) and \(\iota_i\) is injective on \(A_i\setminus\{0_i\}\).
\item The multioperation on a nonempty wedge is determined by:
\begin{enumerate}[label=(\roman*)]
\item \(0\boxplus x=x\boxplus 0=\{x\}\) for all \(x\in C\);
\item if \(x=\iota_i(a)\) and \(y=\iota_i(b)\) lie in the same summand image (and are not both \(0\)), then
\[
x\boxplus y=\iota_i(a\boxplus_i b);
\]
\item if \(x\) and \(y\) lie in distinct non-unit summand images, then \(x\boxplus y=\emptyset\).
\end{enumerate}
\item Inverses are computed within summands: \(-\iota_i(a)=\iota_i(-a)\), and \(-0=0\).
\end{itemize}
In all cases \(C\) is a commutative mosaic, and the \(\iota_i\) (vacuously if \(I=\emptyset\)) exhibit \(C\) as the coproduct of the \(A_i\) in \(\cMsc\)~\cite{NR23}.
\end{definition}

\begin{remark}
\label{rem:cross-empty}
The emptiness of cross-summand products is the key structural fact: the only nonempty products in \(C\) are those internal to a single summand (or involving the unit). Consequently valuation constraints never mix distinct summands.
\end{remark}

\begin{definition}[Coproduct valuation]
\label{def:coprod-val}
Let \((A_i,v_i)_{i\in I}\) be objects of \(\VMscG\), and let \(C=\bigvee_i A_i\) be as in Definition~\ref{def:wedge}. If \(I=\emptyset\), equip \(\mathbf{1}\) with \(v_{\mathbf{1}}(0)=\infty\). If \(I\neq\emptyset\), define \(v_C\colon C\to\Gamma_\infty\) by
\begin{equation}
\label{eq:coprod-val}
v_C(0)\;=\;\infty,
\qquad
v_C\bigl(\iota_i(a)\bigr)\;=\;v_i(a)
\quad\text{for all }i\in I\text{ and }a\in A_i\setminus\{0_i\}.
\end{equation}
(This is well-defined because the \(\iota_i\) are injective off units.)
\end{definition}

\begin{lemma}
\label{lem:coprod-V}
The pair \((C,v_C)\) is an object of \(\VMscG\): axioms~\eqref{v1}--\eqref{v4} hold for every totally ordered abelian group \(\Gamma\).
\end{lemma}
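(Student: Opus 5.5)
The plan is to verify the four axioms directly from Definition~\ref{def:wedge} and~\eqref{eq:coprod-val}, reducing each to the corresponding axiom in a single summand by Remark~\ref{rem:cross-empty}. The empty case \(I=\emptyset\) is immediate: \(\mathbf{1}\) with \(v(0)=\infty\) is the zero object of Proposition~\ref{prop:vmsc-zero}. So assume \(I\neq\emptyset\). A preliminary observation does most of the work. Since \(\iota_i(0_i)=0\) and \(v_i(0_i)=\infty=v_C(0)\), we have \(v_C\circ\iota_i=v_i\) on all of \(A_i\), including the unit, and not only off the unit as stated in~\eqref{eq:coprod-val}.

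\eqref{v1}: \(v_C(0)=\infty\) by definition. For \(x\neq0\), write \(x=\iota_i(a)\) with \(a\neq0_i\). Then \(v_C(x)=v_i(a)<\infty\) by~\eqref{v1} in \(A_i\).

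\eqref{v2}: inverses are computed in summands, so \(v_C(-\iota_i(a))=v_C(\iota_i(-a))=v_i(-a)=v_i(a)\), and \(-0=0\) handles the unit.

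For~\eqref{v3} and~\eqref{v4}, suppose \(z\in x\boxplus y\) in \(C\), and split into cases according to Definition~\ref{def:wedge}.
\begin{itemize}
\item If \(x=0\) (or symmetrically \(y=0\)), then \(z=y\). Here~\eqref{v3} is trivial, and~\eqref{v4} holds because \(\min(\infty,v_C(y))=v_C(y)\).
\item If \(x,y\) lie in distinct non-unit summand images, then \(x\boxplus y=\emptyset\) and there is nothing to check.
\item Otherwise \(x=\iota_i(a)\), \(y=\iota_i(b)\) for a single \(i\), and \(z\in\iota_i(a\boxplus_i b)\), so \(z=\iota_i(c)\) with \(c\in a\boxplus_i b\). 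By the preliminary observation, \(v_C(x)=v_i(a)\), \(v_C(y)=v_i(b)\) and \(v_C(z)=v_i(c)\). This holds even if \(c=0_i\), since then \(z=0\) and both sides are \(\infty\). Thus~\eqref{v3} and~\eqref{v4} in \(A_i\) transfer verbatim.
\end{itemize}

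The only delicate point is bookkeeping in this last case. The element \(z\) may be the basepoint even though \(x,y\) are non-unit, for example when \(b=-a\). The representative \(c\) of \(z\) must therefore be chosen in \(A_i\), using that \(0\) equals \(\iota_i(0_i)\) for every \(i\). The observation \(v_C\circ\iota_i=v_i\) on all of \(A_i\) resolves this. No property of \(\Gamma\) beyond total order is used, which gives the claim for every \(\Gamma\).
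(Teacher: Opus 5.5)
Your proposal is correct and follows essentially the same route as the paper. Both handle the empty case via the zero object, then split into cases (a unit summand, distinct non-unit summands whose sum is empty, a common summand) and reduce \eqref{v3}--\eqref{v4} to the corresponding axioms in \(A_i\). Your observation that \(v_C\circ\iota_i=v_i\) on all of \(A_i\) is a cleaner packaging of the paper's convention \(v_i(0_i)=\infty=v_C(0)\), which it uses to handle sums that land on the basepoint.
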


\begin{proof}
If \(I=\emptyset\), this is Proposition~\ref{prop:vmsc-zero}. Assume \(I\neq\emptyset\).
\eqref{v1}: \(v_C(0)=\infty\), while if \(x=\iota_i(a)\) with \(a\neq 0_i\), then \(v_C(x)=v_i(a)<\infty\) by~\eqref{v1} in \(A_i\). \eqref{v2}: inverses stay in the same summand, and each \(v_i\) is inversion-invariant.

\eqref{v3}. Let \(c\in x\boxplus y\) in \(C\). If \(x=0\) or \(y=0\), say \(x=0\), then \(c=y\) and \(v_C(c)=v_C(y)=\min(\infty,v_C(y))\). If \(x\) and \(y\) lie in distinct non-unit summand images, then \(x\boxplus y=\emptyset\), so there is nothing to prove. If \(x=\iota_i(a)\) and \(y=\iota_i(b)\) lie in the same summand (allowing \(a\) or \(b\) to be \(0_i\) only if the other is handled by the unit laws already), then \(c=\iota_i(d)\) for some \(d\in a\boxplus_i b\), and \(v_C(c)=v_i(d)\) (with the convention \(v_i(0_i)=\infty=v_C(0)\)), whence
\[
v_C(c)=v_i(d)\ge\min\bigl(v_i(a),v_i(b)\bigr)=\min\bigl(v_C(x),v_C(y)\bigr)
\]
by~\eqref{v3} in \(A_i\).

\eqref{v4}. Assume \(c\in x\boxplus y\) and \(v_C(x)\neq v_C(y)\). Cross-summand non-unit products are empty, so either one of \(x,y\) is the unit, or both lie in a single summand.

\emph{Case \(x=0\).} Then \(v_C(x)=\infty\neq v_C(y)\), so \(v_C(y)<\infty\), and \(c=y\), whence \(v_C(c)=v_C(y)=\min(\infty,v_C(y))\). The case \(y=0\) is symmetric.

\emph{Case \(x=\iota_i(a)\), \(y=\iota_i(b)\) with \(a,b\neq 0_i\).} Then \(v_C(x)=v_i(a)\), \(v_C(y)=v_i(b)\), and \(c=\iota_i(d)\) for some \(d\in a\boxplus_i b\). Axiom~\eqref{v4} in \(A_i\) yields \(v_i(d)=\min(v_i(a),v_i(b))\). (In particular \(d=0_i\) is impossible when \(v_i(a)\neq v_i(b)\), since then~\eqref{v4} in \(A_i\) would force \(\infty=\min(v_i(a),v_i(b))<\infty\).) Hence \(v_C(c)=\min(v_C(x),v_C(y))\).
\end{proof}

\begin{theorem}[Coproducts in \(\VMscG\)]
\label{thm:vmsc-coprod}
Let \((A_i,v_i)_{i\in I}\) be a small family of objects of \(\VMscG\). The object \(C=\bigvee_i A_i\) of Definition~\ref{def:wedge}, equipped with the valuation \(v_C\) of Definition~\ref{def:coprod-val}, is a coproduct of the family in \(\VMscG\). In particular \(\VMscG\) has all small coproducts (including the empty coproduct \(\mathbf{1}\)), for every totally ordered abelian group \(\Gamma\).
\end{theorem}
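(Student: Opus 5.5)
The plan is to verify the universal property directly, reducing almost everything to the $\cMsc$-coproduct of Definition~\ref{def:wedge} and checking only the valuation inequality~\eqref{eq:slice-morph}. By Lemma~\ref{lem:coprod-V}, $(C,v_C)$ is already an object of $\VMscG$, so three things remain: the inclusions are morphisms of $\VMscG$, the mediating maps are morphisms of $\VMscG$, and uniqueness holds. The empty case $I=\emptyset$ is handled separately: $\mathbf{1}$ is initial in $\VMscG$ by Proposition~\ref{prop:vmsc-zero}. From now on assume $I\neq\emptyset$.

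First, each $\iota_i\colon(A_i,v_i)\to(C,v_C)$ is a morphism of $\VMscG$. It is a unitary mosaic morphism because it is a coproduct leg in $\cMsc$ by~\cite{NR23}. It is in fact isometric: $v_C(\iota_i(a))=v_i(a)$ holds for $a\neq 0_i$ by~\eqref{eq:coprod-val}, and for $a=0_i$ both sides equal $\infty$. In particular~\eqref{eq:slice-morph} holds, and the $\iota_i$ are also morphisms of the strict subcategory.

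Next, let $f_i\colon(A_i,v_i)\to(B,w)$ be a cocone in $\VMscG$. Forgetting valuations, it is a cocone in $\cMsc$, so there is a unique unitary mosaic morphism $f\colon C\to B$ with $f\circ\iota_i=f_i$. It remains to check $w(f(x))\ge v_C(x)$ for all $x\in C$. Every $x\in C$ is either $0$ or $\iota_i(a)$ for a unique $i$ and a unique $a\neq 0_i$:
\begin{itemize}
\item if $x=0$, then $f(0)=0_B$, since $f$ is unitary, and $w(0_B)=\infty=v_C(0)$;
\item if $x=\iota_i(a)$, then $w(f(x))=w(f_i(a))\ge v_i(a)=v_C(x)$.
\end{itemize}
Uniqueness in $\VMscG$ follows from uniqueness in $\cMsc$, because $\VMscG\to\cMsc$ is faithful: morphisms of $\VMscG$ are mosaic morphisms satisfying an extra property.

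I expect no real obstacle, since the valuation axioms were already settled in Lemma~\ref{lem:coprod-V}. The only point needing care is the bookkeeping of the shared basepoint, namely that $v_C$ is well defined and that the $x=0$ case is covered by unitarity. The same argument, with equalities in place of the inequalities, shows that $C$ is also the coproduct in $\VMscG^{\mathrm{str}}$. This justifies the remark that wedges work for both lax and strict morphisms.
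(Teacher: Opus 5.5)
Your proposal is correct and follows the paper's proof essentially step for step: the empty case via the zero object, Lemma~\ref{lem:coprod-V} for the object, isometry of the inclusions $\iota_i$, the $\cMsc$ mediator checked separately at $0$ and at $\iota_i(a)$, and uniqueness inherited from $\cMsc$. Your closing observation that the same argument, with equalities in place of inequalities, yields the coproduct in $\VMscG^{\mathrm{str}}$ is also correct, and it matches the paper's remark that wedges work for both lax and strict morphisms.
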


\begin{proof}
If \(I=\emptyset\), then \((C,v_C)=(\mathbf{1},v_{\mathbf{1}})\) is initial in \(\VMscG\) by Proposition~\ref{prop:vmsc-zero}, hence is the empty coproduct.

If \(I\neq\emptyset\), Lemma~\ref{lem:coprod-V} places \((C,v_C)\) in \(\VMscG\). Each inclusion \(\iota_i\colon(A_i,v_i)\to(C,v_C)\) is a mosaic morphism, and \(v_C(\iota_i(a))=v_i(a)\) for all \(a\in A_i\) (including units), so \(\iota_i\) is a morphism of \(\VMscG\).

Given morphisms \(f_i\colon(A_i,v_i)\to(D,w)\) in \(\VMscG\), the unique mosaic mediator \(h\colon C\to D\) of the coproduct in \(\cMsc\) satisfies \(h\circ\iota_i=f_i\). For \(x=\iota_i(a)\) one has \(w(h(x))=w(f_i(a))\ge v_i(a)=v_C(x)\), and \(w(h(0))=w(0_D)=\infty=v_C(0)\). Thus \(h\) is a morphism of \(\VMscG\). Uniqueness is as in \(\cMsc\).
\end{proof}

\begin{corollary}
\label{cor:coprod-triv}
Under the isomorphism \(\VMsc_{\mathbf{0}}\simeq\cMsc\), the coproducts of Theorem~\ref{thm:vmsc-coprod} recover those of \(\cMsc\).
\end{corollary}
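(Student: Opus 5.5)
The plan is to make both sides of the claimed identification explicit and then compare them directly. On one side is the coproduct \((C,v_C)\) of Theorem~\ref{thm:vmsc-coprod}, taken in \(\VMsc_{\mathbf{0}}\). On the other is the Nakamura--Reyes coproduct \(\bigvee_i A_i\) in \(\cMsc\) recalled in Definition~\ref{def:wedge}.

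First, I fix the isomorphism \(\VMsc_{\mathbf{0}}\simeq\cMsc\) of Remark~\ref{rem:triv-equiv}. It is the forgetful functor \(U\colon(A,v)\mapsto A\), and its inverse sends \(A\) to \(A\) with the trivial valuation. The key point is that over \(\mathbf{0}\) axiom~\eqref{v1} forces the valuation completely: \(v(0)=\infty\) and \(v(a)=0\) for \(a\neq 0\), since \(\Gamma_\infty=\{0,\infty\}\). Moreover, every mosaic map satisfies~\eqref{eq:slice-morph}. The reason is that \(w(f(a))\ge 0=v(a)\) for \(a\ne0\), and a unitary morphism sends \(0\) to \(0\), so \(w(f(0))=\infty=v(0)\). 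Hence \(U\) is bijective on objects and fully faithful, so it is an isomorphism of categories, not merely an equivalence.

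Second, I apply this to a family \((A_i,v_i)\) in \(\VMsc_{\mathbf{0}}\). By construction, the underlying mosaic of the coproduct in Theorem~\ref{thm:vmsc-coprod} is literally the wedge \(\bigvee_i A_i\), with the same inclusions \(\iota_i\). So \(U\) carries the coproduct cocone \((\iota_i\colon(A_i,v_i)\to(C,v_C))_i\) to the cocone \((\iota_i\colon A_i\to C)_i\), which is the \(\cMsc\)-coproduct of Definition~\ref{def:wedge}. I also check that \(v_C\) is the trivial valuation on \(C\). By~\eqref{eq:coprod-val}, \(v_C(0)=\infty\) and \(v_C(\iota_i(a))=v_i(a)=0\) for \(a\ne 0_i\). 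So the inverse functor sends the \(\cMsc\)-coproduct back to \((C,v_C)\) exactly. The empty family gives \(\mathbf{1}\) on both sides.

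Alternatively, one may simply note that an isomorphism of categories preserves and reflects coproducts, so the coproduct in \(\VMsc_{\mathbf{0}}\) maps to a coproduct in \(\cMsc\). The explicit check above shows further that it maps to the specific Nakamura--Reyes wedge, not just to some isomorphic object. There is no real obstacle in this argument. The only point needing care is the observation that unitary morphisms preserve \(0\) and that \(\Gamma_\infty\) has just two values, since together these make the identification on the nose.
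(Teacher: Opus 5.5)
Your proposal is correct and is the same argument the paper leaves implicit. The paper gives no written proof of this corollary. The underlying cocone of the coproduct in Theorem~\ref{thm:vmsc-coprod} is the Nakamura--Reyes wedge with its inclusions \(\iota_i\), and over \(\mathbf{0}\) the valuation \(v_C\) is forced to be trivial, so the isomorphism of Remark~\ref{rem:triv-equiv} matches the two coproducts exactly. Your check that the forgetful functor is bijective on objects and fully faithful, using that unitary maps fix \(0\), is what justifies calling Remark~\ref{rem:triv-equiv}'s equivalence an isomorphism.
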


\begin{remark}[Comparison with the abstract infimum]
\label{rem:inf-vs-explicit}
One may still define \(v_C^\ast(c)=\inf\{w(h(c))\}\) over all valued cocones out of the underlying coproduct. On a nonempty wedge, the identity cocone of \((C,v_C)\) forces \(v_C^\ast\le v_C\), while Lemma~\ref{lem:coprod-V} and the universal property give \(v_C^\ast\ge v_C\). Thus the explicit formula~\eqref{eq:coprod-val} coincides with that infimum, and no Dedekind completeness of \(\Gamma\) is required: values are copied from the summands.
\end{remark}

\begin{lemma}[Zero morphisms]
\label{lem:zero-morph}
For any objects \((A,v)\), \((B,w)\) of \(\VMscG\) there is a zero morphism \(0\colon A\to B\) (the constant map at the unit of \(B\)). It is valued because \(w(0_B)=\infty\ge v(a)\).
\end{lemma}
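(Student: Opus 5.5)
The statement to prove is Lemma~\ref{lem:zero-morph}: for any \((A,v)\) and \((B,w)\), the constant map \(0\colon A\to B\), \(a\mapsto 0_B\), is a morphism of \(\VMscG\), and it deserves the name ``zero morphism.'' The proof has three parts: the mosaic-morphism conditions, the lax value inequality~\eqref{eq:slice-morph}, and the factorization through the zero object \(\mathbf{1}\) of Proposition~\ref{prop:vmsc-zero}.

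\emph{Step 1 (mosaic morphism).} We need \(0(a\boxplus_A b)\subseteq 0_B\boxplus_B 0_B\) for all \(a,b\in A\). By the identity law of Definition~\ref{def:mosaic}, \(0_B\boxplus_B 0_B=\{0_B\}\). The left side is \(\{0_B\}\) when \(a\boxplus_A b\neq\emptyset\), and \(\emptyset\) otherwise. In either case the inclusion holds, so the possibility of empty sums in a mosaic causes no trouble.

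Unitarity is immediate: the map sends \(0_A\) to \(0_B\). Inverses are also respected, since \(-0_B=0_B\) by uniqueness of inverses (Lemma~\ref{lem:inv-unit}), so \(0(-a)=0_B=-0(a)\).

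\emph{Step 2 (value condition).} By~\eqref{v1} we have \(w(0_B)=\infty\), and \(\infty\ge v(a)\) for every \(a\), so~\eqref{eq:slice-morph} holds.

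\emph{Step 3 (it is the zero morphism).} The constant map equals the composite \(A\to\mathbf{1}\to B\) of the unique maps into and out of the zero object. It is therefore the canonical zero morphism, and composing it with any morphism on either side again gives the constant unit map.

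No step is a real obstacle. The only point needing care is the empty-sum case in Step 1, which is settled by the convention that the image of \(\emptyset\) is \(\emptyset\).
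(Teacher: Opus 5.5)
Your proposal is correct and follows the paper's argument, which consists only of the observation \(w(0_B)=\infty\ge v(a)\) (your Step~2). Steps~1 and~3 make explicit what the paper leaves implicit: that the constant map is a unitary mosaic morphism (including the empty-sum case), and that it is the composite \(A\to\mathbf{1}\to B\) through the zero object of Proposition~\ref{prop:vmsc-zero}.
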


\subsection{Coequalizers}

Coequalizers in \(\cMsc\) exist~\cite{NR23}: in \(\mathtt{HMag}\) one equips the set-theoretic coequalizer \(\pi\colon B\to Q\) with
\begin{equation}
\label{eq:short-coeq}
a\boxplus_Q b=\pi\bigl(\pi^{-1}(a)\boxplus_B\pi^{-1}(b)\bigr),
\end{equation}
and for unital/mosaic coequalizers one further unitizes~\cite[Theorem~3.11, Theorem~4.1]{NR23}. The projection is then a short (regular) epimorphism.

A natural candidate for a valuation on \(Q\) is the fibrewise supremum
\begin{equation}
\label{eq:fibre-sup}
u(q)=\sup\bigl\{w(b)\bigm| \pi(b)=q\bigr\}
\end{equation}
(when the supremum exists in \(\Gamma_\infty\)). The map \(\pi\) is valued for this \(u\), since \(u(\pi(b))\ge w(b)\). However,~\eqref{v3} need not follow: if \(c\in a\boxplus_Q b\), shortness supplies representatives \(\tilde a,\tilde b,\tilde c\) with \(\tilde c\in\tilde a\boxplus_B\tilde b\), whence
\[
u(c)\ge w(\tilde c)\ge\min\bigl(w(\tilde a),w(\tilde b)\bigr),
\]
but \(\min(w(\tilde a),w(\tilde b))\) need not bound \(\min(u(a),u(b))\), because the representatives realizing large values of \(u(a)\) and \(u(b)\) need not participate in a product that maps to \(c\).

Thus we do \emph{not} claim that~\eqref{eq:fibre-sup} defines a coequalizer in \(\VMscG\) for arbitrary \(\Gamma\). In the trivial slice the issue disappears.

\begin{proposition}[Coequalizers in the trivial slice]
\label{prop:coeq-triv}
The category \(\VMsc_{\mathbf{0}}\) has all coequalizers; they coincide with those of \(\cMsc\) under the isomorphism of Remark~\ref{rem:triv-equiv}.
\end{proposition}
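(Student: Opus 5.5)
The plan is to transport the result along the isomorphism of Remark~\ref{rem:triv-equiv} and then check directly that nothing extra is needed on the valuation side. Let \(f,g\colon(A,v)\rightrightarrows(B,w)\) be a parallel pair in \(\VMsc_{\mathbf{0}}\), and let \(\pi\colon B\to Q\) be the coequalizer of the underlying pair in \(\cMsc\), which exists by Theorem~\ref{thm:cmsc-good}. Since \(\Gamma=\mathbf{0}\), each mosaic carries exactly one admissible valuation, namely the trivial one. First I check this: by \eqref{v1}, \(v(a)=\infty\) iff \(a=0\), and otherwise \(v(a)\in\mathbf{0}\), so \(v(a)=0\). Conversely, the trivial valuation satisfies \eqref{v1}--\eqref{v4}, because it is the unit-reflecting map to \(\T(\mathbf{0})\cong\mathbb{K}\) and Proposition~\ref{prop:val-as-hom} applies. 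So I equip \(Q\) with its trivial valuation \(u\).

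Next I verify that every mosaic morphism \(h\colon B\to D\) between trivially valued mosaics satisfies \eqref{eq:slice-morph}. If \(b=0\), then \(h(b)=0\) by unitarity, and both sides equal \(\infty\). If \(b\neq0\), then \(w(b)=0\), and \(w_D(h(b))\in\{0,\infty\}\) is at least \(0\). Hence \(\pi\) is a morphism of \(\VMsc_{\mathbf{0}}\), and \(\pi f=\pi g\) holds because it holds in \(\cMsc\).

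For the universal property, let \(k\colon(B,w)\to(D,t)\) be a \(\VMsc_{\mathbf{0}}\)-morphism with \(kf=kg\). Then \(k\) factors uniquely through \(\pi\) in \(\cMsc\) as \(k=\bar k\pi\). By the previous step \(\bar k\) is automatically a \(\VMsc_{\mathbf{0}}\)-morphism, and uniqueness is inherited from \(\cMsc\) because the forgetful functor is faithful. Equivalently, the whole argument amounts to noting that the forgetful functor \(\VMsc_{\mathbf{0}}\to\cMsc\) is full, faithful and bijective on objects, so it is an isomorphism of categories and preserves and reflects all colimits.

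The only point requiring any care is the fullness claim, that is, that the inequality \eqref{eq:slice-morph} imposes no constraint. This rests entirely on \(\Gamma_\infty=\{0,\infty\}\) and on unitarity sending \(0\) to \(0\). It also explains why the fibrewise-supremum problem described above disappears: there are no intermediate values that could fail \eqref{v3}.
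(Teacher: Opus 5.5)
Your proof is correct and takes the same route as the paper, which simply cites Remark~\ref{rem:triv-equiv} ($\VMsc_{\mathbf{0}}\simeq\cMsc$) together with the existence of coequalizers in $\cMsc$ (Theorem~\ref{thm:cmsc-good}). You additionally write out why that identification holds: each mosaic has exactly one valuation over $\mathbf{0}$, and \eqref{eq:slice-morph} is automatic for unitary morphisms, so the forgetful functor is an isomorphism of categories.
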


\begin{proof}
Immediate from Remark~\ref{rem:triv-equiv} and Theorem~\ref{thm:cmsc-good}.
\end{proof}

\begin{remark}
\label{rem:strict-coeq-vs-fibre-sup}
Corollary~\ref{cor:strict-slice-coeq} does \emph{not} identify the induced valuation on a strict coequalizer with the fibrewise supremum~\eqref{eq:fibre-sup}. The structure map is the unique mosaic morphism through which the codomain valuation factors; under Proposition~\ref{prop:val-as-hom} it is a valuation, but it need not be computed pointwise as a supremum on fibres. The lax coequalizer problem remains open precisely because a coequalizer for inequality morphisms would require a different universal property.
\end{remark}

\begin{theorem}[Colimit summary]
\label{thm:vmsc-cocomplete}
\begin{enumerate}[label=(\alph*)]
\item The lax category \(\VMscG\) has all small coproducts (Theorem~\ref{thm:vmsc-coprod}).
\item \(\VMsc_{\mathbf{0}}\) is complete and cocomplete, and has free objects on pointed sets (Corollary~\ref{cor:triv-cocomplete}).
\item For general \(\Gamma\), coequalizers in the \emph{lax} category \(\VMscG\) remain open; the fibrewise supremum~\eqref{eq:fibre-sup} is a valued quotient map but is not known to satisfy~\eqref{v3}--\eqref{v4}.
\item By contrast, the \emph{strict} slice \(\cMsc/\T(\Gamma)\) is cocomplete: coequalizers (and all small colimits) are created by the forgetful functor from those of \(\cMsc\) (Theorem~\ref{thm:strict-slice-limits}, Corollary~\ref{cor:strict-slice-coeq}).
\end{enumerate}
\end{theorem}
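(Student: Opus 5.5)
The statement is a summary, so I will prove it one item at a time by citing the results already established. No new construction is needed; the work is to check that each cited result says exactly what the item claims, and that the open item~(c) is stated as open rather than asserted.

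For~(a), I invoke Theorem~\ref{thm:vmsc-coprod} directly. That theorem treats an arbitrary small index set \(I\), including \(I=\emptyset\), where \(\mathbf{1}\) is initial by Proposition~\ref{prop:vmsc-zero}. For nonempty \(I\), I equip the Nakamura--Reyes wedge with the summandwise valuation of Definition~\ref{def:coprod-val}. Lemma~\ref{lem:coprod-V} shows this is an object of \(\VMscG\). The mediating mosaic map is valued because values are copied from the summands.

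For~(b), I combine Remark~\ref{rem:triv-equiv} (\(\VMsc_{\mathbf{0}}\simeq\cMsc\)) with Theorem~\ref{thm:cmsc-good}. Transporting completeness and cocompleteness along the equivalence gives Corollary~\ref{cor:triv-cocomplete}. Free objects come from the left adjoint \(\mathbf{F}\) of Proposition~\ref{Fs}, composed with the equivalence. The one check needed is that the trivial valuation is forced, so that every mosaic morphism is a \(\VMsc_{\mathbf{0}}\)-morphism. This holds because \(\mathbf{0}\) has a single element and unit-reflection pins down the value \(\infty\).

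For~(c), nothing is proved; the item only records status. I will point to the discussion around~\eqref{eq:fibre-sup}. There \(\pi\) is valued for the fibrewise supremum \(u\), but the attempted verification of~\eqref{v3} stalls: representatives realising large \(u(a)\) and \(u(b)\) need not be the ones whose product maps to \(c\). I will also note Remark~\ref{rem:strict-coeq-vs-fibre-sup}: the strict coequalizer does not settle the lax problem, because the lax universal property quantifies over inequality morphisms, which are not jointly controlled by the codomain valuation.

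For~(d), Theorem~\ref{thm:strict-slice-limits} shows that the forgetful functor from the slice creates colimits. Applied to parallel pairs, this gives Corollary~\ref{cor:strict-slice-coeq}. Because colimit legs are jointly epic, mediating maps automatically lie over \(\T(\Gamma)\).

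The main obstacle is one of precision rather than computation. Item~(d) concerns the full slice \(\cMsc/\T(\Gamma)\), whose structure maps need not reflect the unit. The paper's earlier remark that unit-reflecting objects are closed under created colimits is asserted, not proved, and for coequalizers it can fail when the quotient identifies a nonzero element with \(0\). So I will state~(d) for the full slice, exactly as the statement does, and not claim it for \(\VMscG^{\mathrm{str}}\).
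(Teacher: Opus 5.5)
Your proposal is correct and follows the paper's own approach. The paper justifies this summary purely through the results cited in each item. Your item-by-item checks are the right ones: in particular, that \eqref{v1} forces the trivial valuation when \(\Gamma=\mathbf{0}\), and that item (d) needs only the colimit half of Theorem~\ref{thm:strict-slice-limits}.

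One correction to your closing paragraph, which your proof does not depend on: the failure mode you describe for unit-reflection cannot occur. Let \(f,g\colon(A,v)\rightrightarrows(B,w)\) lie over \(\T(\Gamma)\), and let \(\pi\colon B\to Q\) be their \(\cMsc\)-coequalizer with induced structure map \(u\), so that \(u\circ\pi=w\). If \(\pi(b)=0_Q\), then \(w(b)=u(0_Q)=\infty\), hence \(b=0_B\) whenever \(w\) reflects the unit. So the strict quotient never identifies a nonzero element with \(0\).

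Colimit legs in \(\cMsc\) are jointly surjective: the union of their images is a sub-mosaic through which the colimit cocone factors, so it is everything. Hence the same computation shows that the induced structure map on any such colimit of unit-reflecting objects again reflects the unit. The paper's unproved closure remark is therefore true at least for colimits, though stating (d) for the full slice, as you do, is of course still correct.
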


\section{Associativity and factor nesting}
\label{sec:assoc}

The ambient categories of Sections~\ref{sec:valued}--\ref{sec:lax} do not assume associativity. The point of that separation is not to dismiss associativity, but to study it \emph{inside} a valued setting. Full associativity~\eqref{eq:hyper-assoc} is a strong, non-universal condition (Remark~\ref{rem:hyper-assoc}) that forces totality and cuts down to polygroups (Proposition~\ref{prop:asc-total}). A weaker, still meaningful shadow is \emph{factor nesting}: when a triple product is inhabited, the two binary products that appear as factors in the reversibility calculus are nested by inclusion. Under totality alone, factor nesting already implies associativity (Proposition~\ref{prop:fn-implies-asc}); for total product-ultrametric mosaics the two properties are equivalent (Theorem~\ref{thm:asc-fn}). Weak valuation axioms alone do not force nesting of sums as sets (Remark~\ref{rem:val-not-ulm}). The Krasner ball axiom~\eqref{kvh} will later be combined with totality and factor nesting to recover associativity, ultrametric balls, and the superiorly canonical package by Mittas (Lemma~\ref{lem:unique-diff}, Theorem~\ref{thm:kvh-sch}). 

\subsection{Associativity and totality}

\begin{definition}
\label{def:assoc-valued}
A valued mosaic \((A,v)\) is \emph{associative} if the underlying mosaic is associative in the sense of~\eqref{eq:hyper-assoc}. Write \(\mathbf{VPol}_\Gamma\) for the full subcategory of \(\VMscG\) on associative objects.
\end{definition}

\begin{proposition}
\label{prop:asc-total}
Every associative commutative mosaic is total. Consequently associative objects of \(\cMsc\) are precisely commutative polygroups, and associative objects of \(\VMscG\) are valued commutative polygroups.
\end{proposition}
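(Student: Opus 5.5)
The plan is to show first that associativity forces every binary sum \(a\boxplus b\) to be nonempty, which is what ``total'' means here. This is the content of \cite[Lemma~2.6]{NR23}, recalled in Definition~\ref{def:polygroup}. To make the argument self-contained, I will use Lemma~\ref{lem:inv-unit} and the unit law. Fix \(a,b\in A\).

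The computation runs as follows.
\begin{enumerate}[label=(\arabic*)]
\item Since \(0\in(-a)\boxplus a\), we have \(b\in 0\boxplus b\subseteq\bigl((-a)\boxplus a\bigr)\boxplus b\).
\item Associativity~\eqref{eq:hyper-assoc} rewrites this set as \((-a)\boxplus(a\boxplus b)\), so \(b\in(-a)\boxplus(a\boxplus b)\).
\item By the convention \(X\boxplus\emptyset=\emptyset\) of Remark~\ref{rem:hyper-assoc}, the right-hand side would be empty if \(a\boxplus b=\emptyset\). Hence \(a\boxplus b\neq\emptyset\), and \(A\) is total.
\end{enumerate}

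The remaining claims are bookkeeping. By Definition~\ref{def:polygroup}, a commutative polygroup is an associative commutative mosaic. So the associative objects of \(\cMsc\) are exactly the commutative polygroups, and totality comes for free from the first step rather than as an extra hypothesis. For \(\VMscG\), Definition~\ref{def:assoc-valued} makes associativity a condition on the underlying mosaic alone. Therefore an associative valued mosaic is precisely a commutative polygroup equipped with a valuation satisfying~\eqref{v1}--\eqref{v4}, that is, a valued commutative polygroup.

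I do not expect a real obstacle. The only point needing care is the empty-set convention in the extension of \(\boxplus\) to subsets, which is exactly what turns associativity into totality. I will also check that, for \((X\boxplus Y)\boxplus Z\) with singleton outer arguments, \eqref{eq:hyper-assoc} is used in its set-extended form. This is legitimate because \(\bigl((-a)\boxplus a\bigr)\boxplus b\) is a union over the elements \(x\in(-a)\boxplus a\) of \(x\boxplus b\). Restricting to \(x=0\) therefore gives the inclusion in step (1) directly.
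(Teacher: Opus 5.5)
Your proposal is correct and is essentially the paper's argument: you insert $0\in(-a)\boxplus a$ into $0\boxplus b$ and use associativity together with the empty-set convention. The only difference is that you put $-a$ in the first slot, so $b\in(-a)\boxplus(a\boxplus b)$ gives $a\boxplus b\neq\emptyset$ directly and you skip the paper's final substitution $a\mapsto -a$.
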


\begin{proof}
Let \((A,\boxplus,0)\) be associative and fix \(a,b\in A\). By Lemma~\ref{lem:inv-unit}, \(0\in a\boxplus(-a)\). Therefore
\[
b\in 0\boxplus b\;\subseteq\;(a\boxplus(-a))\boxplus b\;=\;a\boxplus((-a)\boxplus b)
\]
by the identity law and associativity, so \((-a)\boxplus b\neq\emptyset\). Replacing \(a\) by \(-a\) (and using \(-(-a)=a\)) yields \(a\boxplus b\neq\emptyset\).
\end{proof}

\begin{example}
\label{ex:assoc-examples}
Krasner \(\mathbb{K}\) and tropical \(\T(\Gamma)\) (Examples~\ref{ex:K-val},~\ref{ex:trop-val}) are associative valued mosaics. The free mosaic \(\mathbf{F}(S)\) with trivial valuation is typically \emph{not} associative: for distinct non-basepoints \(a,b\),
\[
\bigl((a,1)\boxplus(a,-1)\bigr)\boxplus(b,1)=\{(b,1)\},
\qquad
(a,1)\boxplus\bigl((a,-1)\boxplus(b,1)\bigr)=\emptyset.
\]
\end{example}

\begin{proposition}
\label{prop:VPol-products}
The full subcategory \(\mathbf{VPol}_\Gamma\subseteq\VMscG\) is closed under finite products computed in \(\VMscG\).
\end{proposition}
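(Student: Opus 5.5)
By Theorem~\ref{thm:vmsc-prod}, the finite product of \((A_1,v_1),\ldots,(A_n,v_n)\) in \(\VMscG\) is the \(\cMsc\)-product \(P=\prod_k A_k\) with the \(\min\)-valuation. Whether an object of \(\VMscG\) lies in \(\mathbf{VPol}_\Gamma\) depends only on its underlying mosaic (Definition~\ref{def:assoc-valued}). So the claim reduces to an unvalued statement: the \(\cMsc\)-product of finitely many associative commutative mosaics is associative. The empty product \(\mathbf{1}=\{0\}\) is trivially associative, since \(0\boxplus0=\{0\}\). For \(n\ge1\), I will argue directly with the componentwise description of the hyperoperation rather than inducting on \(n\).

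Recall from~\cite{NR23} and the proof of Theorem~\ref{thm:vmsc-prod} that in \(P\) the hyperoperation is componentwise:
\[
(a_k)_k\boxplus(b_k)_k=\prod_k\bigl(a_k\boxplus_k b_k\bigr),
\]
with the unit and inverses also componentwise. The key step is to show that, for families of subsets \(X_k,Y_k\subseteq A_k\), the extension to subsets commutes with products:
\[
\Bigl(\prod_k X_k\Bigr)\boxplus\Bigl(\prod_k Y_k\Bigr)=\prod_k\bigl(X_k\boxplus_k Y_k\bigr).
\]
The inclusion \(\subseteq\) is immediate from the definition. For \(\supseteq\), given \(d=(d_k)\) with \(d_k\in x_k\boxplus y_k\) for some \(x_k\in X_k\) and \(y_k\in Y_k\), I choose these witnesses independently in each coordinate and assemble \(x=(x_k)\) and \(y=(y_k)\). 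This needs only finitely many choices.

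The identity also holds when some factor is empty. If some \(X_k\), \(Y_k\), or \(X_k\boxplus_k Y_k\) is empty, then both sides are empty: the left because an empty factor makes the product set empty or yields no witness, and the right because one of its factors is empty. This empty case is the one point needing care, because \(\cMsc\) allows empty sums. Here it is harmless, since all factors are associative and hence total by Proposition~\ref{prop:asc-total}.

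Applying the displayed identity twice gives
\[
(a\boxplus b)\boxplus c=\prod_k\bigl((a_k\boxplus_k b_k)\boxplus_k c_k\bigr),
\qquad
a\boxplus(b\boxplus c)=\prod_k\bigl(a_k\boxplus_k(b_k\boxplus_k c_k)\bigr).
\]
Associativity in each \(A_k\) makes the corresponding factors equal, so the two sides agree in \(P\). Hence \((P,u)\in\mathbf{VPol}_\Gamma\). Because \(\mathbf{VPol}_\Gamma\) is a full subcategory, the product cone and the universal property from Theorem~\ref{thm:vmsc-prod} restrict to it, and \(P\) is also the product in \(\mathbf{VPol}_\Gamma\).
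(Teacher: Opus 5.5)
Your proof is correct and follows the same route as the paper: the product in \(\VMscG\) is the componentwise \(\cMsc\)-product with the \(\min\)-valuation (Theorem~\ref{thm:vmsc-prod}), and associativity is checked coordinatewise, which the paper asserts in one line and you spell out via the identity \(\bigl(\prod_k X_k\bigr)\boxplus\bigl(\prod_k Y_k\bigr)=\prod_k\bigl(X_k\boxplus_k Y_k\bigr)\). That identity in fact holds with no totality assumption, since your witness argument already covers empty factors, so the appeal to Proposition~\ref{prop:asc-total} is harmless but unnecessary.
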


\begin{proof}
The underlying multioperation of a finite product in \(\VMscG\) is the componentwise product of the underlying polygroups, hence associative and total. The product valuation satisfies~\eqref{v1}--\eqref{v4} by Theorem~\ref{thm:vmsc-prod}.
\end{proof}

\begin{remark}
\label{rem:VPol-coprod}
The underlying wedge of two nontrivial polygroups is typically not total (cross-summand products are empty), hence not a polygroup. Thus \(\mathbf{VPol}_\Gamma\) is not closed under the coproducts of \(\VMscG\).
\end{remark}

\subsection{Triple products and factor nesting}

\begin{lemma}[Intersection form of triple membership]
\label{lem:triple-inter}
Let \((A,\boxplus,0)\) be a commutative mosaic. For all \(a,b,c,d\in A\),
\begin{align}
d\in a\boxplus(b\boxplus c)
&\;\iff\;
((-a)\boxplus d)\cap(b\boxplus c)\neq\emptyset,
\label{eq:right-assoc}\\
d\in(a\boxplus b)\boxplus c
&\;\iff\;
(a\boxplus b)\cap(d\boxplus(-c))\neq\emptyset.
\label{eq:left-assoc}
\end{align}
\end{lemma}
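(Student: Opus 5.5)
The plan is to unfold the set-level definition of iterated multiaddition from Remark~\ref{rem:hyper-assoc} and then move one element across each membership using reversibility (Definition~\ref{def:mosaic}(iii)). No associativity or totality is needed. Only the three equivalent forms of reversibility, commutativity, and the involution \(a\mapsto -a\) are used.

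For~\eqref{eq:right-assoc}, first unfold the left side. By Remark~\ref{rem:hyper-assoc}, \(d\in a\boxplus(b\boxplus c)\) holds if and only if there is \(y\in b\boxplus c\) with \(d\in a\boxplus y\). Next apply reversibility in the form \(c'\in x\boxplus y\iff y\in(-x)\boxplus c'\), with \(c'=d\) and \(x=a\). This turns \(d\in a\boxplus y\) into \(y\in(-a)\boxplus d\). So the condition becomes: there exists \(y\) with \(y\in b\boxplus c\) and \(y\in(-a)\boxplus d\). That is exactly \(((-a)\boxplus d)\cap(b\boxplus c)\neq\emptyset\). Both directions of the equivalence come from the same chain of biconditionals. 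If \(b\boxplus c=\emptyset\), both sides are false by the convention \(X\boxplus\emptyset=\emptyset\), so that case is consistent.

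For~\eqref{eq:left-assoc}, unfold in the same way. \(d\in(a\boxplus b)\boxplus c\) holds if and only if there is \(x\in a\boxplus b\) with \(d\in x\boxplus c\). Then use reversibility in the form \(d\in x\boxplus c\iff x\in d\boxplus(-c)\). This gives the existence of \(x\in(a\boxplus b)\cap(d\boxplus(-c))\), as required.

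The only real obstacle is bookkeeping: making sure each step uses the correct side of the reversibility biconditional. The two forms needed are \(c\in a\boxplus b\iff a\in c\boxplus(-b)\) for the left-associated product and \(c\in a\boxplus b\iff b\in(-a)\boxplus c\) for the right-associated one. These are precisely the two forms stated in Definition~\ref{def:mosaic}, so no appeal to commutativity is needed. If a symmetric variant were wanted, commutativity could rewrite \((-a)\boxplus d\) as \(d\boxplus(-a)\).
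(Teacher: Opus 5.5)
Your proposal is correct and matches the paper's proof: unfold the set-level triple product and apply the two reversibility equivalences \(d\in a\boxplus y\iff y\in(-a)\boxplus d\) and \(d\in x\boxplus c\iff x\in d\boxplus(-c)\) to the intermediate element. Your bookkeeping of which form of reversibility to use on each side is accurate. As you note at the end, commutativity is not actually needed.
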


\begin{proof}
Immediate from reversibility: \(d\in a\boxplus x\) iff \(x\in(-a)\boxplus d\), and \(d\in x\boxplus c\) iff \(x\in d\boxplus(-c)\).
\end{proof}

Thus full associativity~\eqref{eq:hyper-assoc} is equivalent to the two intersection conditions~\eqref{eq:right-assoc} and~\eqref{eq:left-assoc} being equivalent for all \(a,b,c,d\). Factor nesting asks only that, when a triple product is inhabited, the two binary factors appearing in~\eqref{eq:right-assoc} (resp.~\eqref{eq:left-assoc}) are nested by inclusion.

\begin{notation}
Two subsets \(X,Y\subseteq A\) are \emph{nested} if \(X\subseteq Y\) or \(Y\subseteq X\).
\end{notation}

\begin{definition}[Factor nesting]
\label{def:fn}
A commutative mosaic \((A,\boxplus,0)\) satisfies \emph{factor nesting} if for all \(a,b,c,d\in A\),
\begin{enumerate}[label=(FN)]
\item\label{fn}
\(d\in(a\boxplus b)\boxplus c\) implies that \((-a)\boxplus d\) and \(b\boxplus c\) are nested, and\\
\(d\in a\boxplus(b\boxplus c)\) implies that \(a\boxplus b\) and \(d\boxplus(-c)\) are nested.
\end{enumerate}
\end{definition}

\begin{remark}[Logical form]
\label{rem:fn-universal}
Encode the multioperation by a ternary relation \(R(a,b,c)\) for \(c\in a\boxplus b\), and treat additive inverse as a function symbol.
Then \(d\in(a\boxplus b)\boxplus c\) is \(\exists x\,\bigl(R(a,b,x)\land R(x,c,d)\bigr)\).
The implication \(\bigl(\exists x\,\varphi(x)\bigr)\to\psi\) with \(x\) not free in \(\psi\) is equivalent to \(\forall x\,(\varphi(x)\to\psi)\).
Moreover \(X\subseteq Y\) or \(Y\subseteq X\) is
\[
\bigl(\forall z\,\alpha(z)\bigr)\lor\bigl(\forall w\,\beta(w)\bigr),
\]
which is equivalent to \(\forall z\,\forall w\,\bigl(\alpha(z)\lor\beta(w)\bigr)\) for quantifier-free \(\alpha,\beta\) (membership implications between instances of \(R\)).
Hence each clause of~\eqref{fn} is equivalent to a sentence of the form
\[
\forall \;
\bigl(R(a,b,x)\land R(x,c,d)\;\to\;\alpha(z)\lor\beta(w)\bigr),
\]
i.e.\ a \emph{universal} sentence (prenex \(\forall^\ast\) with quantifier-free matrix).
By contrast, multivalued associativity~\eqref{eq:hyper-assoc} equates two existentially defined triple products and is not universal (Remark~\ref{rem:hyper-assoc}), while totality is \(\forall\exists\).
Under totality, factor nesting implies associativity (Proposition~\ref{prop:fn-implies-asc}), so the Krasner package total\({}+{}\)FN\({}+{}\)KVH may treat full associativity as a theorem rather than a primitive non-universal axiom.
\end{remark}

\begin{definition}[Product ultrametricity]
\label{def:ulm}
A commutative mosaic is \emph{product-ultrametric} if whenever \((a\boxplus b)\cap(c\boxplus d)\neq\emptyset\), the sets \(a\boxplus b\) and \(c\boxplus d\) are nested.
\end{definition}

\begin{remark}
\label{rem:ball-examples}
Product-ultrametricity is the set-theoretic nesting of multioperation outputs. It is precisely axiom~(SCH2) in Mittas' theory of \emph{superiorly canonical} hypergroups (recalled below). It holds for \(\mathbb{K}\) and \(\T(\Gamma)\), and for free mosaics (Lemma~\ref{lem:free-ulm}); it can fail for valued mosaics (Remark~\ref{rem:val-not-ulm}).
\end{remark}

\begin{proposition}
\label{prop:free-fn}
The free mosaic \(\mathbf{F}(S)\) satisfies factor nesting, for every pointed set \(S\).
\end{proposition}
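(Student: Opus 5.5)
The plan is to exploit how small binary products are in \(\mathbf{F}(S)\): every product is either empty or a singleton. By Proposition~\ref{Fs}, \(0\boxplus x=\{x\}\), \((a,i)\boxplus(a,-i)=\{0\}\), and every other product of two nonzero elements is empty. Two subsets of \(\mathbf{F}(S)\) that are each empty or a singleton are automatically nested unless both are nonempty and distinct. So each clause of~\ref{fn} reduces to one claim: if the hypothesis holds and both factor sets are inhabited, then they coincide.

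To make this uniform, I would embed \(\mathbf{F}(S)\) into the free abelian group \(G=\mathbb{Z}^{(S\setminus\{0\})}\) on the nonzero elements of \(S\). The embedding is \(\epsilon(0)=0\) and \(\epsilon(a,i)=i\,e_a\). It is clearly injective, and it commutes with negation. A three-case check (one argument zero; \(x=-y\); otherwise) shows
\[
x\boxplus y\subseteq\{z\mid \epsilon(z)=\epsilon(x)+\epsilon(y)\},
\]
and that \(x\boxplus y\neq\emptyset\) exactly when \(\epsilon(x)+\epsilon(y)\in\epsilon(\mathbf{F}(S))\). In the last case the equality holds precisely because an empty product occurs only when the group sum \(\pm e_a\pm e_b\) with \(a\neq b\), or \(2e_a\), lies outside the image. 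Thus \(\boxplus\) is the restriction of the group addition to a partial operation.

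With this in hand, the first clause goes as follows. Suppose \(d\in(a\boxplus b)\boxplus c\), witnessed by some \(x\in a\boxplus b\) with \(d\in x\boxplus c\). Then \(\epsilon(d)=\epsilon(a)+\epsilon(b)+\epsilon(c)\) in \(G\). If \((-a)\boxplus d\) is inhabited, it is the singleton of the element with image \(\epsilon(d)-\epsilon(a)=\epsilon(b)+\epsilon(c)\). If \(b\boxplus c\) is inhabited, it is the singleton of the element with image \(\epsilon(b)+\epsilon(c)\). By injectivity of \(\epsilon\) the two singletons coincide. If either set is empty, the two sets are nested trivially. The second clause is symmetric: from \(d\in a\boxplus(b\boxplus c)\) we again get \(\epsilon(d)=\epsilon(a)+\epsilon(b)+\epsilon(c)\). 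Both \(a\boxplus b\) and \(d\boxplus(-c)\), when nonempty, are the singleton of the element with image \(\epsilon(a)+\epsilon(b)\).

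The only real work, and the step I expect to need care, is the verification that \(\boxplus\) on \(\mathbf{F}(S)\) is exactly ``group sum if it lands in the image, else empty''. In particular one must confirm that the case \(a=b\), \(j=i\) (image \(2e_a\)) and the cross cases \(a\neq b\) are empty, and that this matches Proposition~\ref{Fs}. The degenerate cases where \(S=\{0\}\) or some argument is \(0\) are absorbed by the identity law. No associativity is used; indeed Example~\ref{ex:assoc-examples} shows it fails. The argument only transports the nesting question into \(G\), where uniqueness of sums settles it.
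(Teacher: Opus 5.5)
Your proof is correct, but it is organised differently from the paper's. The paper does a direct case analysis on the first clause. If $x=0$, both factors equal $\{d\}$. If $x\neq 0$, nonemptiness of $x\boxplus y$ forces $y=0$ or $y=-x$, and in each subcase the two factors $(-x)\boxplus d$ and $y\boxplus z$ are computed explicitly and seen to coincide; the second clause is declared symmetric. You instead push every case distinction into one preliminary check: under the injective, negation-compatible map $\epsilon\colon\mathbf{F}(S)\to\mathbb{Z}^{(S\setminus\{0\})}$ one has $x\boxplus y=\epsilon^{-1}\bigl(\epsilon(x)+\epsilon(y)\bigr)$, so $\boxplus$ is group addition truncated to the image. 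After that, both clauses follow from $\epsilon(d)-\epsilon(a)=\epsilon(b)+\epsilon(c)$, respectively $\epsilon(d)-\epsilon(c)=\epsilon(a)+\epsilon(b)$, together with injectivity. (One harmless slip: for $i=j=-1$ the group sum is $-2e_a$, not $2e_a$; it still lies outside the image.) The paper's route is shorter and needs no auxiliary object. Yours is more conceptual and more general: it applies verbatim to any mosaic $A\subseteq G$, with $G$ an abelian group, $0\in A=-A$, and $x\boxplus y=\{x+y\}\cap A$. It also isolates what actually drives the result. Singleton-valued sums alone would not suffice: a non-associative Steiner loop is a total commutative mosaic with singleton sums, so by Proposition~\ref{prop:fn-implies-asc} it must fail (FN). It is the associativity of the ambient group $G$, supplied by your embedding, that makes the two candidate factors name the same element.
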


\begin{proof}
Nonempty sums in \(\mathbf{F}(S)\) are severely restricted (identity laws or inverse pairs). Write elements of \(\mathbf{F}(S)\) as the unit \(0\) or as pairs \((a,i)\) with \(a\in S\setminus\{0\}\) and \(i\in\{-1,1\}\). Suppose \(d\in(x\boxplus y)\boxplus z\). Then \(x\boxplus y\neq\emptyset\).

If \(x=0\), then \(d\in y\boxplus z\). The factor \((-x)\boxplus d=\{d\}\) is nested with \(y\boxplus z\) because \(d\in y\boxplus z\) and every nonempty sum in \(\mathbf{F}(S)\) is a singleton. If \(x\neq0\), write \(x=(a,i)\). Nonemptiness of \(x\boxplus y\) forces \(y=0\) or \(y=-x\). If \(y=0\), then \(d\in x\boxplus z\), so either \(z=0\) (hence \(d=x\) and both factors equal \(\{0\}\)) or \(z=-x\) (hence \(d=0\) and both factors equal \(\{-x\}\)). If \(y=-x\), then \(x\boxplus y=\{0\}\) and \(d\in\{z\}\), so \(d=z\) and the factors \((-x)\boxplus d\) and \(y\boxplus z\) coincide. The opposite clause of~\eqref{fn} is symmetric.
\end{proof}

\begin{lemma}
\label{lem:free-ulm}
The free mosaic \(\mathbf{F}(S)\) is product-ultrametric.
\end{lemma}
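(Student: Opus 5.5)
The claim is that whenever \((x\boxplus y)\cap(z\boxplus w)\neq\emptyset\) in \(\mathbf{F}(S)\), the two sums are nested. The plan is to first classify the nonempty sums completely, and then observe that nonempty sums are singletons, so two intersecting sums must be equal.

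By the identity law and the explicit formula of Proposition~\ref{Fs}, the sum \(x\boxplus y\) is nonempty only in three cases:
\begin{enumerate}[label=(\roman*)]
\item \(x=0\), in which case the sum is \(\{y\}\);
\item \(y=0\), in which case the sum is \(\{x\}\);
\item \(x=(a,i)\) and \(y=(a,-i)\), in which case the sum is \(\{0\}\).
\end{enumerate}
In every other case, namely both arguments nonzero and not mutually inverse, the sum is empty. In particular, every nonempty sum in \(\mathbf{F}(S)\) is a singleton. The only point to double-check is the overlap of cases (i) and (ii) when \(x=y=0\); there the identity law gives \(0\boxplus0=\{0\}\), which is again a singleton, so the classification is consistent.

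Now suppose \((x\boxplus y)\cap(z\boxplus w)\) contains some element \(e\). Then both sums are nonempty, so by the classification each is a singleton. Since both contain \(e\), we get \(x\boxplus y=\{e\}=z\boxplus w\). Equal sets are trivially nested, which is exactly the condition of Definition~\ref{def:ulm}.

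There is no real obstacle here. The only step requiring care is the exhaustive case check that nonempty sums are singletons. That fact was already used in the proof of Proposition~\ref{prop:free-fn}, and it follows directly from the defining formula of \(\boxplus\) on \(\mathbf{F}(S)\) together with the unit laws.
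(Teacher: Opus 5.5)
Your proof is correct and follows the paper's argument: every nonempty sum in \(\mathbf{F}(S)\) is a singleton, so two intersecting sums coincide and are trivially nested. Your explicit three-case classification spells out what the paper states in a single line.
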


\begin{proof}
Every nonempty sum is either a singleton \(\{(a,i)\}\) or \(\{0\}\). Any two such sets that intersect must coincide, hence are nested.
\end{proof}

\begin{corollary}
\label{cor:fn-not-asc}
Factor nesting does not imply associativity: \(\mathbf{F}(S)\) with the trivial valuation is a product-ultrametric valued mosaic satisfying~\eqref{fn} but neither total nor associative whenever \(S\) has at least two non-basepoints (Example~\ref{ex:assoc-examples}, Lemma~\ref{lem:free-ulm}).
\end{corollary}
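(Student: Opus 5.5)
The corollary is assembled from results already proved, so the plan is to verify each of its four claims about \(\mathbf{F}(S)\) with the trivial valuation separately.

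\emph{Valued mosaic.} By Example~\ref{ex:free-val} and Remark~\ref{rem:triv-equiv}, \(\mathbf{F}(S)\) with the trivial valuation (\(v(0)=\infty\), \(v(x)=0\) otherwise) is an object of \(\VMsc_{\mathbf{0}}\). Axioms~\eqref{v1}--\eqref{v4} hold trivially; for instance, \eqref{v4} only constrains sums involving the unit, where \(c\) equals the other summand.

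\emph{Product-ultrametric and factor nesting.} Product-ultrametricity is Lemma~\ref{lem:free-ulm}. Factor nesting is Proposition~\ref{prop:free-fn}. Both properties concern only the underlying mosaic, so the valuation plays no role here.

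\emph{Not total.} Take distinct non-basepoints \(a,b\in S\). By the formula in Proposition~\ref{Fs}, \((a,1)\boxplus(b,1)=\emptyset\), because \(a\neq b\).

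\emph{Not associative.} I will cite the computation in Example~\ref{ex:assoc-examples}. Since \((a,1)\boxplus(a,-1)=\{0\}\) and \(0\boxplus(b,1)=\{(b,1)\}\), the left-hand side of~\eqref{eq:hyper-assoc} is \(\{(b,1)\}\). Since \((a,-1)\boxplus(b,1)=\emptyset\), the right-hand side is \(\emptyset\). Alternatively, non-associativity follows from non-totality via the contrapositive of Proposition~\ref{prop:asc-total}.

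Nothing here is a real obstacle. The only hypothesis that needs care is "at least two non-basepoints." With one non-basepoint \(a\), the mosaic \(\mathbf{F}(S)=\{0,(a,1),(a,-1)\}\) is still not total, since \((a,1)\boxplus(a,1)=\emptyset\), so non-totality needs only one non-basepoint. The two-point hypothesis is what makes the explicit witness of Example~\ref{ex:assoc-examples} available, and it is the condition I will keep in the statement.
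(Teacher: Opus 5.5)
Your proposal is correct and follows the paper's own argument: the corollary is assembled from Proposition~\ref{prop:free-fn}, Lemma~\ref{lem:free-ulm} and the computation in Example~\ref{ex:assoc-examples}, with non-totality read off directly or via Proposition~\ref{prop:asc-total}. Your side remark is also right: a single non-basepoint already suffices, since \((a,1)\boxplus(a,1)=\emptyset\).
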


\begin{proposition}[Associativity without factor nesting]
\label{prop:asc-not-fn}
There exist total associative mosaics that fail factor nesting. In particular, the lax product \(\mathbb{K}\times\mathbb{K}\) (with componentwise multioperation) is a total associative mosaic that is not product-ultrametric and fails~\eqref{fn}.
\end{proposition}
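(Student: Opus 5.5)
The plan is to verify everything by direct computation in \(\mathbb{K}\times\mathbb{K}\), whose underlying set is \(\{(0,0),(1,0),(0,1),(1,1)\}\). It is the \(\cMsc\)-product with componentwise multioperation \((x,y)\boxplus(x',y')=(x\boxplus x')\times(y\boxplus y')\) and unit \((0,0)\). Since every element of \(\mathbb{K}\) is its own inverse, every element of \(\mathbb{K}\times\mathbb{K}\) is its own inverse too, so \(-a=a\) throughout.

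First, totality and associativity. Both are inherited componentwise from \(\mathbb{K}\), exactly as in the proof of Proposition~\ref{prop:VPol-products}. A product of nonempty sets is nonempty, which gives totality. For associativity, iterated sums of subsets factor as products:
\[
\bigl((a\boxplus b)\boxplus c\bigr)=\bigl((a_1\boxplus b_1)\boxplus c_1\bigr)\times\bigl((a_2\boxplus b_2)\boxplus c_2\bigr).
\]
So associativity of \(\mathbb{K}\) (Example~\ref{ex:assoc-examples}) gives associativity of the product. This identity holds because both coordinates are nonempty, so no degenerate empty-product cases arise.

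Second, failure of product-ultrametricity. I will use the two sums
\[
(1,0)\boxplus(1,0)=\{(0,0),(1,0)\},\qquad (0,1)\boxplus(0,1)=\{(0,0),(0,1)\}.
\]
Both contain \((0,0)\), but neither set contains the other.

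Third, failure of~\eqref{fn}, which is the only step needing a search for a witness. I take \(a=d=(1,0)\) and \(b=c=(0,1)\). Then \(a\boxplus b=\{(1,1)\}\), and
\[
(1,1)\boxplus(0,1)=\{(1,0),(1,1)\}\ni d,
\]
so \(d\in(a\boxplus b)\boxplus c\). However, \((-a)\boxplus d=(1,0)\boxplus(1,0)=\{(0,0),(1,0)\}\) and \(b\boxplus c=\{(0,0),(0,1)\}\) are not nested. This is precisely the non-nested pair from the previous step, so the first clause of~\eqref{fn} fails.

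The main obstacle is simply locating this witness. The guiding idea is that the non-nested pair exhibited against product-ultrametricity should reappear as the two factors in Lemma~\ref{lem:triple-inter}. One then checks that the chosen triple product is actually inhabited by \(d\).
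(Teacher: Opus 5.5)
Your proposal is correct and follows the paper's proof. You get totality and associativity componentwise, and you use the same non-nested pair \((1,0)\boxplus(1,0)\), \((0,1)\boxplus(0,1)\) against product-ultrametricity. Your factor-nesting witness \(a=d=(1,0)\), \(b=c=(0,1)\) is the paper's \(a=d=(0,1)\), \(b=c=(1,0)\) with the coordinates swapped. One small correction: the factorisation of \((a\boxplus b)\boxplus c\) into coordinatewise iterated sums holds unconditionally, so your remark that it needs both coordinates nonempty is unnecessary.
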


\begin{proof}
Associativity and totality of \(\mathbb{K}\times\mathbb{K}\) are componentwise. Remark~\ref{rem:val-not-ulm} records the failure of product-ultrametricity. For factor nesting, set \(a=(0,1)\), \(b=c=(1,0)\), and \(d=(0,1)\). Then
\[
a\boxplus b=\{(1,1)\},\qquad
(a\boxplus b)\boxplus c=\{(0,1),(1,1)\},
\]
so \(d\in(a\boxplus b)\boxplus c\), while
\[
(-a)\boxplus d=\{(0,0),(0,1)\},\qquad
b\boxplus c=\{(0,0),(1,0)\}
\]
are incomparable. Thus~\eqref{fn} fails.
\end{proof}

\begin{remark}[Valuation does not imply product-ultrametricity]
\label{rem:val-not-ulm}
Axioms~\eqref{v3}--\eqref{v4} constrain \emph{values} of elements of products, not nesting of products as sets. The lax product \(\mathbb{K}\times\mathbb{K}\) with valuation \(\min(v,w)\) is a valued mosaic (Theorem~\ref{thm:vmsc-prod}) that is associative as a product of polygroups, yet fails product-ultrametricity: \((1,0)\boxplus(1,0)\) and \((0,1)\boxplus(0,1)\) meet at \((0,0)\) but are incomparable. Thus~\eqref{v3}--\eqref{v4} alone neither force factor nesting nor replace the nesting hypothesis in Theorem~\ref{thm:asc-fn}.
\end{remark}

\subsection{When factor nesting characterises associativity}

\begin{proposition}[Factor nesting implies associativity under totality]
\label{prop:fn-implies-asc}
Let \((A,\boxplus,0)\) be a total mosaic satisfying factor nesting. Then \(A\) is associative.
\end{proposition}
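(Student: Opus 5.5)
The plan is to prove the two inclusions of \eqref{eq:hyper-assoc} separately. Each inclusion will come from one clause of \eqref{fn} combined with the intersection criteria of Lemma~\ref{lem:triple-inter}. Totality enters only to guarantee that the two sets being compared are nonempty.

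For the inclusion \((a\boxplus b)\boxplus c\subseteq a\boxplus(b\boxplus c)\), fix \(d\in(a\boxplus b)\boxplus c\).
\begin{enumerate}[label=(\arabic*)]
\item The first clause of \eqref{fn} says that \(X:=(-a)\boxplus d\) and \(Y:=b\boxplus c\) are nested, say \(X\subseteq Y\) (the other case is symmetric).
\item By totality, \(X=(-a)\boxplus d\neq\emptyset\).
\item Hence \(X\cap Y=X\neq\emptyset\).
\item Criterion \eqref{eq:right-assoc} of Lemma~\ref{lem:triple-inter} then gives \(d\in a\boxplus(b\boxplus c)\).
\end{enumerate}
In the case \(Y\subseteq X\), use \(Y=b\boxplus c\neq\emptyset\) instead.

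For the reverse inclusion, fix \(d\in a\boxplus(b\boxplus c)\).
\begin{enumerate}[label=(\arabic*)]
\item The second clause of \eqref{fn} says that \(a\boxplus b\) and \(d\boxplus(-c)\) are nested.
\item Both sets are nonempty by totality.
\item Being nested, their intersection equals the smaller of the two, so it is nonempty.
\item Criterion \eqref{eq:left-assoc} then yields \(d\in(a\boxplus b)\boxplus c\).
\end{enumerate}
Since \(a,b,c,d\) were arbitrary, \eqref{eq:hyper-assoc} holds.

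I expect no real obstacle. The only point needing care is that nestedness alone does not give a nonempty intersection (one of the two sets might be empty). That is precisely where totality is needed, and Corollary~\ref{cor:fn-not-asc} shows that the hypothesis cannot be dropped. I would close with a remark to that effect.
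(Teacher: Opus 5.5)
Your proposal is correct and follows the paper's proof: factor nesting makes the two factors nested, totality makes both nonempty so they intersect, and Lemma~\ref{lem:triple-inter} turns that intersection into membership in the other bracketing. The only difference is that you write out the reverse inclusion via the second clause, whereas the paper just says it is symmetric.
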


\begin{proof}
Let \(d\in(a\boxplus b)\boxplus c\). By~\eqref{fn}, the sets \(X:=(-a)\boxplus d\) and \(Y:=b\boxplus c\) are nested. Totality gives \(X\neq\emptyset\neq Y\). Nested nonempty sets have nonempty intersection, so \(X\cap Y\neq\emptyset\). Lemma~\ref{lem:triple-inter} yields \(d\in a\boxplus(b\boxplus c)\). The dual inclusion is symmetric.
\end{proof}

\begin{proposition}[Associativity implies factor nesting under product-ultrametricity]
\label{prop:asc-implies-fn}
Let \((A,\boxplus,0)\) be a total product-ultrametric mosaic. If \(A\) is associative, then it satisfies factor nesting.
\end{proposition}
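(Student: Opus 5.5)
Suppose \(d\in(a\boxplus b)\boxplus c\). We need \((-a)\boxplus d\) and \(b\boxplus c\) to be nested, and the plan is to exhibit both as binary sums with a common element, so that product-ultrametricity applies directly.

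\textbf{Step 1 (associativity).} Associativity gives \(d\in a\boxplus(b\boxplus c)\).

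\textbf{Step 2 (common element).} By \eqref{eq:right-assoc} of Lemma~\ref{lem:triple-inter}, \(d\in a\boxplus(b\boxplus c)\) is equivalent to
\[
\bigl((-a)\boxplus d\bigr)\cap\bigl(b\boxplus c\bigr)\neq\emptyset.
\]
Concretely, there is \(x\in b\boxplus c\) with \(d\in a\boxplus x\), and reversibility turns the latter into \(x\in(-a)\boxplus d\).

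\textbf{Step 3 (nesting).} Both \((-a)\boxplus d\) and \(b\boxplus c\) are sums of two elements and they intersect. Definition~\ref{def:ulm} therefore makes them nested, which is the first clause of \eqref{fn}.

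\textbf{Second clause.} Suppose \(d\in a\boxplus(b\boxplus c)\). Associativity gives \(d\in(a\boxplus b)\boxplus c\). Then \eqref{eq:left-assoc} yields
\[
(a\boxplus b)\cap\bigl(d\boxplus(-c)\bigr)\neq\emptyset,
\]
and product-ultrametricity makes \(a\boxplus b\) and \(d\boxplus(-c)\) nested, as required.

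There is no real obstacle. The only point to check is that the sets compared in \eqref{fn} are genuinely of the form \(p\boxplus q\) for single elements \(p,q\), since Definition~\ref{def:ulm} only speaks about such sums. Here they are: \((-a)\boxplus d\), \(b\boxplus c\), \(a\boxplus b\) and \(d\boxplus(-c)\) are each sums of two elements. Totality is not needed for this direction. Membership already produces the common element, and totality only matters for the converse, Proposition~\ref{prop:fn-implies-asc}.
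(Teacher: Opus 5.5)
Your proof is correct and is essentially the paper's argument. Associativity switches the bracketing, Lemma~\ref{lem:triple-inter} (reversibility) gives a common element of the two binary sums, product-ultrametricity nests them, and the second clause is symmetric; your side remark is also right, since totality is not used in this direction by you or by the paper.
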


\begin{proof}
Suppose \(d\in(a\boxplus b)\boxplus c\). Associativity gives \(d\in a\boxplus(b\boxplus c)\), so Lemma~\ref{lem:triple-inter} yields \(((-a)\boxplus d)\cap(b\boxplus c)\neq\emptyset\). Product-ultrametricity implies that \((-a)\boxplus d\) and \(b\boxplus c\) are nested. The other clause is symmetric.
\end{proof}

\begin{theorem}[Associativity via factor nesting]
\label{thm:asc-fn}
Let \((A,\boxplus,0)\) be a total product-ultrametric mosaic. The following are equivalent:
\begin{enumerate}[label=(\roman*)]
\item\label{asc-i} \(A\) is associative;
\item\label{fn-i} \(A\) satisfies factor nesting~\eqref{fn}.
\end{enumerate}
In particular this applies to \(\mathbb{K}\) and to \(\T(\Gamma)\), viewed as total product-ultrametric valued mosaics.
\end{theorem}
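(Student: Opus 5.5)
The equivalence follows by combining the two preceding propositions, so the plan is mainly bookkeeping plus a check of the hypotheses for the named examples.

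For \ref{fn-i}\(\Rightarrow\)\ref{asc-i}, I invoke Proposition~\ref{prop:fn-implies-asc}, which needs only totality; product-ultrametricity is not used. For \ref{asc-i}\(\Rightarrow\)\ref{fn-i}, I invoke Proposition~\ref{prop:asc-implies-fn}. The mechanism is that associativity converts an inhabited left-bracketed triple product into an inhabited right-bracketed one. Lemma~\ref{lem:triple-inter} then turns this into a nonempty intersection of the two relevant binary sums, and product-ultrametricity turns that intersection into nesting. The second clause of~\eqref{fn} is handled the same way, using~\eqref{eq:left-assoc} instead of~\eqref{eq:right-assoc}.

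For the ``in particular'' clause, I must check that \(\mathbb{K}\) and \(\T(\Gamma)\) are total and product-ultrametric; both are associative by Example~\ref{ex:assoc-examples}.

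Totality is immediate from the tables. For \(\mathbb{K}\), the sums are \(\{x\}\) or \(\{0,1\}\). For \(\T(\Gamma)\), every sum is either a singleton or a nonempty upper ray \([\gamma,\infty]\).

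For product-ultrametricity of \(\mathbb{K}\), every sum is either a singleton or the whole set. Two intersecting sets of this kind are nested, since a singleton meeting another set lies inside it.

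For \(\T(\Gamma)\), the possible sums are singletons \(\{\gamma\}\) (including \(\{\infty\}\)) and closed rays \([\gamma,\infty]\). Take two intersecting sums. If one is a singleton, it is contained in the other. If both are rays, they are nested because \(\Gamma_\infty\) is totally ordered.

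This verification for \(\T(\Gamma)\) is the only step needing care, and even it is a short case check. So the whole proof amounts to citing Propositions~\ref{prop:fn-implies-asc} and~\ref{prop:asc-implies-fn} and doing the case analysis above for the examples.
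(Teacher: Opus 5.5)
Your proposal is correct and takes the same route as the paper, whose proof is exactly the combination of Proposition~\ref{prop:fn-implies-asc} (totality only) and Proposition~\ref{prop:asc-implies-fn} (product-ultrametricity applied through Lemma~\ref{lem:triple-inter}). Your explicit check that \(\mathbb{K}\) and \(\T(\Gamma)\) are total and product-ultrametric is also correct; the paper only asserts these facts in Remark~\ref{rem:ball-examples}.
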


\begin{proof}
Combine Propositions~\ref{prop:fn-implies-asc} and~\ref{prop:asc-implies-fn}.
\end{proof}

\begin{remark}
\label{rem:fn-proof-note}
Totality is essential for Proposition~\ref{prop:fn-implies-asc}: without it, factor nesting need not yield associativity. Free mosaics satisfy~\eqref{fn} and fail both totality and associativity (Corollary~\ref{cor:fn-not-asc}). Product-ultrametricity is essential for Proposition~\ref{prop:asc-implies-fn}: associativity alone does not force nesting of binary products (see e.g.\ phase-type polygroups in the literature).
\end{remark}

\begin{example}
\label{ex:fn-examples}
\begin{enumerate}[label=(\alph*)]
\item \(\T(\Gamma)\) and \(\mathbb{K}\) are total and product-ultrametric; by Theorem~\ref{thm:asc-fn} they satisfy factor nesting. 
\item \(\mathbf{F}(S)\) is product-ultrametric and satisfies factor nesting, but is neither total nor associative (Corollary~\ref{cor:fn-not-asc}).
\item \(\mathbb{K}\times\mathbb{K}\) is total and associative yet fails both product-ultrametricity and factor nesting (Proposition~\ref{prop:asc-not-fn}).
\end{enumerate}
\end{example}

\subsection{Krasner valuations: from values to balls}

The weak axioms~\eqref{v1}--\eqref{v4} constrain the \emph{values} of elements of sums. They do not force sums, as subsets, to nest. Factor nesting organises the combinatorial shadow of associativity: under totality it yields ASC (Proposition~\ref{prop:fn-implies-asc}), and under totality and product-ultrametricity it is equivalent to ASC (Theorem~\ref{thm:asc-fn}). What is still missing is a geometric upgrade of the valuation itself, i.e., one that forces sums to be ultrametric balls. Combined with totality and factor nesting, that upgrade produces the superiorly canonical package (Theorem~\ref{thm:kvh-sch}).

That upgrade is the additive content of Krasner's valuation axiom from~\cite{Lin23}, isolated here without multiplication. Write \(a\boxminus b:=a\boxplus(-b)\) for the multivalued difference.

\begin{definition}[Initial segment]
\label{def:initial}
A subset \(\rho\subseteq\Gamma\) is an \emph{initial segment} if \(\delta\in\rho\) and \(\gamma<\delta\) imply \(\gamma\in\rho\). Write \(\gamma>\rho\) to mean \(\gamma>\delta\) for all \(\delta\in\rho\) (equivalently \(\gamma\notin\rho\) when \(\rho\) is initial).
\end{definition}

\begin{definition}[Additive Krasner valuation]
\label{def:kvh}
A valued mosaic \((A,v)\) over \(\Gamma\) is a \emph{Krasner valued mosaic}, and \(v\) an \emph{additive Krasner valuation}, if there exists an initial segment \(\rho_v\subseteq\Gamma\) with \(0\in\rho_v\) (the \emph{norm} of \(v\)) such that for all \(x,y,z,t\in A\) with \(z\in x\boxplus y\),
\begin{enumerate}[label=(KVH)]
\item\label{kvh}
\(t\in x\boxplus y\) if and only if \(v(s)>\rho_v+\min\bigl(v(x),v(y)\bigr)\) for every \(s\in z\boxminus t\).
\end{enumerate}
\end{definition}

Informally,~\eqref{kvh} says that once a sum \(x\boxplus y\) is inhabited, membership in that sum is completely determined by an open-ball condition in the value group, measured from any centre \(z\in x\boxplus y\). Thus~\eqref{kvh} upgrades a weak valuation from a constraint on values to a description of sums as sets.

\begin{lemma}[Unique difference values and balls]
\label{lem:unique-diff}
Let \((A,v)\) be a total Krasner valued mosaic that satisfies factor nesting~\eqref{fn}.
Then \(A\) is associative (hence a valued polygroup), and:
\begin{enumerate}[label=(\roman*)]
\item\label{ud-i} if \(x\neq y\), then \(v\) is constant on \(x\boxminus y\); write \(d_v(x,y)\) for that common value and set \(d_v(x,x)=\infty\);
\item\label{ud-ii} \(d_v\) is an ultrametric on \(A\) with values in \(\Gamma_\infty\);
\item\label{ud-iii} for all \(x,y\in A\) and every \(z\in x\boxplus y\),
\[
x\boxplus y
=\bigl\{t\in A \bigm| d_v(z,t)>\rho_v+\min\bigl(v(x),v(y)\bigr)\bigr\}.
\]
\end{enumerate}
\end{lemma}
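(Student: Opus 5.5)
Associativity comes for free: the hypotheses are totality and factor nesting, so Proposition~\ref{prop:fn-implies-asc} gives that \(A\) is associative, and Proposition~\ref{prop:asc-total} together with Definition~\ref{def:polygroup} shows it is a valued commutative polygroup. The remaining work is to extract the three ball statements from \eqref{kvh}, using totality to guarantee that every sum is inhabited, so that \eqref{kvh} can be applied to every pair \(x,y\).

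For \ref{ud-i}, fix \(x\neq y\). By totality \(x\boxminus y\) is nonempty; pick \(z\in x\boxminus y\), so that \(z\in x\boxplus(-y)\). Since \(z\in z\boxminus 0\)-type identities give nothing, I would instead compare two elements \(s,s'\in x\boxminus y\).

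\begin{itemize}
\item By \eqref{v3}--\eqref{v4} applied to \(s'\in s\boxplus(s'\boxminus s)\) (reversibility), it suffices to show that \(v(s'')>v(s)\) for every \(s''\in s'\boxminus s\) when \(s\neq s'\).
\item Apply \eqref{kvh} with centre \(z=s\) and \(t=s'\). It yields \(v(s'')>\rho_v+\min(v(x),v(y))\). Since \(0\in\rho_v\), this gives \(v(s'')>\min(v(x),v(y))\).
\item So I need \(v(s)\le\min(v(x),v(y))\)\dots{} which fails in general; for example \(v(x)=v(y)\) allows larger values.
\end{itemize}

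So instead I would argue by symmetry: \(s'\in s\boxplus u\) with \(v(u)>\rho_v+m\), where \(m=\min(v(x),v(y))\). If \(v(s)\neq v(s')\), then \eqref{v4} forces \(v(u)=\min(v(s),v(s'))\). Swapping the roles of \(s\) and \(s'\) and using \eqref{kvh} again, together with \(v(s),v(s')\ge m\) (\eqref{v3}), I expect a contradiction from the norm condition. Roughly, the smaller of \(v(s),v(s')\) must itself exceed \(\rho_v+m\). I would then need an extra step, \emph{not} available from the values alone, to exclude this. Here I would use \(x\neq y\) together with \eqref{kvh} applied to \(0\): \(0\in x\boxminus y\) iff every element of \(z\boxminus 0=\{z\}\) has value \(>\rho_v+m\). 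I expect this reconciliation to be the main obstacle, and I would try first to show that elements of \(x\boxminus y\) with value above \(\rho_v+m\) force \(x=y\) via factor nesting.

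For \ref{ud-ii}, symmetry comes from \(y\boxminus x=-(x\boxminus y)\) and \eqref{v2}. Positivity comes from \(0\in x\boxminus y\iff x=y\) (Lemma~\ref{lem:inv-unit} uniqueness). For the ultrametric inequality, take \(s\in x\boxminus y\), \(s'\in y\boxminus w\) and \(r\in s\boxplus s'\); by associativity \(r\in x\boxminus w\), so \(d_v(x,w)=v(r)\ge\min(v(s),v(s'))\) by \eqref{v3}. Finally, \ref{ud-iii} is \eqref{kvh} rewritten using \ref{ud-i}: for \(t\neq z\), all \(s\in z\boxminus t\) have value \(d_v(z,t)\), and for \(t=z\) both sides hold trivially, since \(t\in x\boxplus y\) and \(d_v=\infty\).
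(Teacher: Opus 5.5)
Your outline has the right overall shape, and your (iii), symmetry and positivity agree with the paper. But (i) is left unfinished and the ultrametric inequality in (ii) rests on a false step.

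\textbf{Part (i).} You have every ingredient of the paper's argument here, but you stop short and propose to finish ``via factor nesting''. That detour is neither needed nor what closes the proof. Take $s,s'\in x\boxminus y$ with $v(s)<v(s')$, and put $m=\min(v(x),v(y))=\min(v(x),v(-y))$. By totality pick $u\in s\boxminus s'$. The forward direction of (KVH) for the sum $x\boxplus(-y)$, with centre $s$ and test element $s'$, gives $v(u)>\rho_v+m$. Meanwhile (V4) gives $v(u)=v(s)$, so $v(s)>\rho_v+m$. Now apply (KVH) to the same sum with the \emph{same centre} $s$ and test element $t=0$. Since $s\boxminus 0=\{s\}$, the ball condition holds, so $0\in x\boxplus(-y)$. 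Reversibility then gives $-y\in(-x)\boxplus 0=\{-x\}$, i.e.\ $x=y$, a contradiction. What you were missing is only that (KVH) may be used with \emph{any} centre in the sum, in particular the element of smaller value. Factor nesting enters the lemma only through associativity.

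\textbf{Part (ii), ultrametric inequality.} The step ``$r\in s\boxplus s'$ implies, by associativity, $r\in x\boxminus w$'' is false. Associativity gives
\[
(x\boxminus y)\boxplus(y\boxminus w)=x\boxplus\bigl(((-y)\boxplus y)\boxplus(-w)\bigr)\supseteq x\boxplus\bigl(0\boxplus(-w)\bigr)=x\boxminus w .
\]
The inclusion is typically strict, because $(-y)\boxplus y$ is larger than $\{0\}$. So an arbitrary $r\in s\boxplus s'$ need not lie in $x\boxminus w$, and then $v(r)$ tells you nothing about $d_v(x,w)$. For instance, whenever $-s\in y\boxminus w$ you may take $s'=-s$; then $0\in s\boxplus s'$, although $0\notin x\boxminus w$ since $x\neq w$. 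This already happens in Krasner's quotient hyperfield $\mathbb{Q}_5/(1+5\mathbb{Z}_5)$ with $x=[1]$, $y=[1/5]$, $w=[2]$. There $x\boxminus y=\{[-1/5]\}$ and $y\boxminus w=\{[1/5]\}$, while $x\boxminus w=\{[-1]\}$.

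The fix, which is what the paper does, is to reverse the quantifiers. Start from some $r\in x\boxminus w$, which exists by totality. The displayed inclusion then produces $s\in x\boxminus y$ and $s'\in y\boxminus w$ with $r\in s\boxplus s'$. Finally (V3) together with (i) gives $d_v(x,w)=v(r)\ge\min(v(s),v(s'))=\min(d_v(x,y),d_v(y,w))$ for pairwise distinct $x,y,w$.
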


\begin{proof}
By Proposition~\ref{prop:fn-implies-asc}, the total mosaic \(A\) is associative, so \((A,v)\) is a valued polygroup.
We follow~\cite[Proposition~4.8]{Lin23}, using totality, associativity,~\eqref{v1}--\eqref{v4}, and~\eqref{kvh}.

\eqref{ud-i}. Let \(x\neq y\) and \(z,t\in x\boxminus y\), and suppose \(v(z)<v(t)\). Then necessarily \(v(x)=v(y)\): otherwise~\eqref{v4} would force every element of \(x\boxminus y=x\boxplus(-y)\) to have value \(\min(v(x),v(y))\). Since \(z\in x\boxminus y\) and \(t\in x\boxminus y\), the forward implication in~\eqref{kvh} (with product \(x\boxminus y\) and centre \(z\)) yields \(v(s)>\rho_v+\min(v(x),v(y))\) for all \(s\in z\boxminus t\). But~\eqref{v4} and \(v(z)<v(t)\) give \(v(s)=v(z)\) for such \(s\), so \(v(z)>\rho_v+\min(v(x),v(y))\). Applying~\eqref{kvh} again to test membership of \(0\) in \(x\boxminus y\): the condition is \(v(s)>\rho_v+\min(v(x),v(y))\) for all \(s\in z\boxminus 0=\{z\}\), which holds. Hence \(0\in x\boxminus y\), so \(x=y\), a contradiction. Thus \(v(z)=v(t)\).

\eqref{ud-ii}. If \(x=y\), then \(d_v(x,y)=\infty\) by definition. Conversely, if \(x\neq y\), then~\eqref{ud-i} supplies a constant value \(\gamma\) of \(v\) on the nonempty set \(x\boxminus y\); this \(\gamma\) cannot be \(\infty\), for otherwise every \(s\in x\boxminus y\) would satisfy \(v(s)=\infty\), hence \(s=0\) by~\eqref{v1}, so \(x\boxminus y=\{0\}\) and \(0\in x\boxminus y\), whence \(x=y\). Thus \(d_v(x,y)=\infty\Leftrightarrow x=y\). Symmetry uses \(v(-a)=v(a)\).

For the ultrametric inequality \(d_v(x,z)\ge\min\bigl(d_v(x,y),d_v(y,z)\bigr)\), the cases where any two of \(x,y,z\) coincide are immediate. Assume they are pairwise distinct, and fix \(r\in x\boxminus z\), so \(d_v(x,z)=v(r)\). Since \(0\in(-y)\boxplus y\) and the mosaic is associative,
\begin{align*}
x\boxplus(-z)
&=x\boxplus\bigl(0\boxplus(-z)\bigr)\\
&\subseteq x\boxplus\bigl(\bigl((-y)\boxplus y\bigr)\boxplus(-z)\bigr)\\
&=x\boxplus\bigl((-y)\boxplus\bigl(y\boxplus(-z)\bigr)\bigr)\\
&=\bigl(x\boxplus(-y)\bigr)\boxplus\bigl(y\boxplus(-z)\bigr).
\end{align*}
Thus \(r\in p\boxplus q\) for some \(p\in x\boxminus y\) and \(q\in y\boxminus z\). Axiom~\eqref{v3} yields \(v(r)\ge\min\bigl(v(p),v(q)\bigr)=\min\bigl(d_v(x,y),d_v(y,z)\bigr)\), as required.

\eqref{ud-iii}. Fix \(z\in x\boxplus y\). By~\eqref{kvh}, \(t\in x\boxplus y\) if and only if \(v(s)>\rho_v+\min(v(x),v(y))\) for all \(s\in z\boxminus t\). For \(t\neq z\), the latter means \(d_v(z,t)>\rho_v+\min(v(x),v(y))\) by~\eqref{ud-i}; for \(t=z\), both sides hold.
\end{proof}

\begin{remark}[Balls nest]
\label{rem:kvh-total}
Intersecting open balls in an ultrametric space are nested~\cite[Lemma~4.5(iii)]{Lin23}. Combined with Lemma~\ref{lem:unique-diff}\eqref{ud-iii}, this yields product-ultrametricity under the hypotheses of that lemma (total\({}+{}\)FN\({}+{}\)KVH):~\eqref{kvh} forces sums to be balls once associativity is available, and balls nest.
\end{remark}

\begin{definition}[Superiorly canonical mosaic]
\label{def:sch}
A commutative mosaic \(A\) is \emph{superiorly canonical} if it is total and satisfies:
\begin{enumerate}[label=(SCH\arabic*)]
\item\label{sch1} if \(x\in x\boxplus y\), then \(x\boxplus y=\{x\}\);
\item\label{sch2} if \((x\boxplus y)\cap(z\boxplus t)\neq\emptyset\), then \(x\boxplus y\) and \(z\boxplus t\) are nested (product-ultrametricity);
\item\label{sch3} if \(x\neq y\) and \(z,t\in x\boxminus y\), then \(z\boxminus z=t\boxminus t\);
\item\label{sch4} if \(x\in z\boxminus z\) and \(y\notin z\boxminus z\), then \(x\boxminus x\subseteq y\boxminus y\).
\end{enumerate}
\end{definition}

(This is the additive content of Mittas' superiorly canonical hypergroups as in~\cite[Definition~4.17]{Lin23}.)

\begin{theorem}[Factor nesting and Krasner balls imply superior canonicity]
\label{thm:kvh-sch}
Let \((A,v)\) be a total Krasner valued mosaic that satisfies factor nesting~\eqref{fn}.
Then \(A\) is associative (hence a valued polygroup) and superiorly canonical.
In particular multioperation outputs are open ultrametric balls and \(A\) is product-ultrametric.
\end{theorem}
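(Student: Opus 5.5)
The plan is to take Lemma~\ref{lem:unique-diff} as the main engine. It already gives associativity, via Proposition~\ref{prop:fn-implies-asc}. It also makes \(d_v\) an ultrametric and describes every sum as an open ball:
\[
x\boxplus y=B(z,\rho_x)\quad\text{with}\quad B(z,\rho):=\{t\mid d_v(z,t)>\rho\},\qquad \rho_x:=\rho_v+\min(v(x),v(y)),
\]
for any \(z\in x\boxplus y\). Here \(\rho_v+\gamma\) is itself an initial segment. Totality is part of the hypothesis, so what remains is to check \ref{sch1}--\ref{sch4} using ultrametric geometry. First, \ref{sch2} is Remark~\ref{rem:kvh-total}. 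Suppose \(w\in(x\boxplus y)\cap(z\boxplus t)\). Recentring both balls at \(w\) (allowed by Lemma~\ref{lem:unique-diff}\eqref{ud-iii}) writes them as \(B(w,\rho)\) and \(B(w,\rho')\) for two initial segments. Initial segments of \(\Gamma\) are totally ordered by inclusion, so the two balls are nested.

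Next comes the key computation of \(z\boxminus z\). Taking \(0\in z\boxminus z\) as centre gives
\[
z\boxminus z=\{s\mid v(s)>\rho_v+v(z)\}.
\]
For \ref{sch3}, take \(x\neq y\) and \(z,t\in x\boxminus y\). Lemma~\ref{lem:unique-diff}\eqref{ud-i} gives \(v(z)=v(t)\), hence \(z\boxminus z=t\boxminus t\). For \ref{sch4}, suppose \(x\in z\boxminus z\) and \(y\notin z\boxminus z\). Then \(v(x)>\rho_v+v(z)\ge v(y)\). Since \(0\in\rho_v\), the segment \(\rho_v+v(x)\) contains \(\rho_v+v(y)\). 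This gives \(x\boxminus x\subseteq y\boxminus y\), because the balls about \(0\) shrink as the radius segment grows.

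For \ref{sch1}, let \(x\in x\boxplus y\). If \(y=0\) we are done. Otherwise apply \eqref{kvh} with centre \(x\) and compute \(v(y)\) relative to the radius. By reversibility, \(x\in x\boxplus y\) gives \(y\in x\boxminus x\), so \(v(y)>\rho_v+v(x)\), and in particular \(v(y)>v(x)\) since \(0\in\rho_v\). Then \(\min(v(x),v(y))=v(x)\), so
\[
x\boxplus y=\{t\mid d_v(x,t)>\rho_v+v(x)\}.
\]
This ball is not obviously a singleton, and I expect this to be the main obstacle. One route is to show that any \(t\ne x\) in it is excluded. Reversibility gives \(y\in t\boxminus x\), so by \eqref{ud-i} \(d_v(t,x)=v(y)\). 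Thus the ball consists of those \(t\) for which \(y\) lies in \(t\boxminus x\), which is consistent but not yet contradictory.

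To make the argument close, I would first try to show that the norm satisfies \(\rho_v+\gamma\) compared to values of differences in a way that forces \(B(x,\rho_v+v(x))\) to meet \(x\boxplus y\) only at \(x\). The intended mechanism uses associativity: \(x\boxplus y\supseteq x\boxplus(y\boxplus y\boxminus y)\), together with the ball description of \(y\boxminus y\), and then compares radii. If this fails, the fallback is to follow Mittas' standard argument as in \cite{Lin23}, deriving \ref{sch1} from the reversibility clause and \ref{sch2}. Concretely, \(x\boxplus y\) and \(x\boxplus 0=\{x\}\) both contain \(x\), so by \ref{sch2} they are nested. One then excludes \(\{x\}\subsetneq x\boxplus y\) via \eqref{kvh} applied to \(x\boxminus x\ni y\). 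Finally, the closing assertions of the theorem are just \eqref{ud-iii} together with \ref{sch2}.
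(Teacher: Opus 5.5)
Your handling of (SCH2)--(SCH4) and of the closing assertions is correct and is essentially the paper's argument. You recentre the balls of Lemma~\ref{lem:unique-diff}\eqref{ud-iii}, compare the initial segments $\rho_v+\gamma$, and compute $z\boxminus z=\{s\mid v(s)>\rho_v+v(z)\}$ from the centre $0$. (In (SCH4), the inclusion $\rho_v+v(y)\subseteq\rho_v+v(x)$ comes from $v(y)<v(x)$ and initiality, not from $0\in\rho_v$.)

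The genuine gap is (SCH1), which you leave unproved, and none of your fallbacks closes it:
\begin{enumerate}[label=(\arabic*)]
\item Applying (KVH) to $x\boxminus x\ni y$ only returns the bound $v(y)>\rho_v+v(x)$, which you already have. It says nothing about other elements of $x\boxplus y$.
\item Comparing $x\boxplus y$ with $x\boxplus 0=\{x\}$ via (SCH2) only gives the trivial inclusion $\{x\}\subseteq x\boxplus y$.
\item The associativity route has the inclusion backwards. From $0\in y\boxminus y$ one gets $x\boxplus y\subseteq x\boxplus\bigl(y\boxplus(y\boxminus y)\bigr)$, not the reverse, and you name no radius comparison.
\end{enumerate}

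The missing idea is to apply (KVH) to a \emph{different} sum and use it to test whether $0$ lies in it. This is exactly the move in the proof of Lemma~\ref{lem:unique-diff}\eqref{ud-i}.

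You already have $y\in t\boxminus x$ for every $t\in x\boxplus y$. Read $t\boxminus x$ as the ball centred at $y$ with radius $\rho_v+\min(v(t),v(x))$. Since $y\boxminus 0=\{y\}$, (KVH) gives $0\in t\boxminus x$ as soon as $v(y)>\rho_v+\min(v(t),v(x))$. This holds because $v(y)>\rho_v+v(x)$ and $\min(v(t),v(x))\le v(x)$. (The paper first derives $v(t)=v(x)$ from (V4); that step is not needed.) Then $0\in t\boxplus(-x)$, and reversibility gives $t\in 0\boxplus x=\{x\}$. Hence $x\boxplus y=\{x\}$.

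In your $d_v$ language: for $t\neq x$ one has $0\notin t\boxminus x$. The ball description of $t\boxminus x$ then forces $d_v(t,x)\not>\rho_v+\min(v(t),v(x))$, and hence $d_v(t,x)\not>\rho_v+v(x)$. So the ball $\{t\mid d_v(x,t)>\rho_v+v(x)\}$ is exactly $\{x\}$. The contradiction you were looking for comes from comparing $d_v(t,x)=v(y)$ with the radius of $t\boxminus x$, not with the radius of $x\boxplus y$.
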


By Proposition~\ref{prop:fn-implies-asc}, the same conclusions hold if one assumes associativity in place of factor nesting.

\begin{proof}
Lemma~\ref{lem:unique-diff} supplies associativity and identifies inhabited products with open balls.
We adapt~\cite[Proposition~4.19]{Lin23}. Write \(\rho=\rho_v\).

\eqref{sch1}. Suppose \(x\in x\boxplus y\). Reversibility gives \(y\in x\boxminus x\). Since \(0\in x\boxminus x\) (Lemma~\ref{lem:inv-unit}), Lemma~\ref{lem:unique-diff}\eqref{ud-iii} shows that \(x\boxminus x\) is the open ball of centre \(0\) and radius \(\rho+v(x)\), so \(v(y)>\rho+v(x)\ge v(x)\). If \(z\in x\boxplus y\), then~\eqref{v4} and \(v(y)>v(x)\) force \(v(z)=v(x)\). Reversibility yields \(y\in z\boxminus x\). Then \(v(y)>\rho+v(x)=\rho+\min(v(z),v(x))\), so Lemma~\ref{lem:unique-diff}\eqref{ud-iii} (equivalently~\eqref{kvh}) gives \(0\in z\boxminus x\), hence \(z=x\). Thus \(x\boxplus y=\{x\}\).

\eqref{sch2}. Products are open ultrametric balls (Lemma~\ref{lem:unique-diff}\eqref{ud-iii}); intersecting open balls are nested~\cite[Lemma~4.5(iii)]{Lin23}.

\eqref{sch3}. Let \(x\neq y\) and \(z,t\in x\boxminus y\). Lemma~\ref{lem:unique-diff}\eqref{ud-i} gives \(v(z)=v(t)\). By~\eqref{kvh}, \(a\in z\boxminus z\) if and only if \(v(a)>\rho+v(z)\), and likewise for \(t\boxminus t\). Hence \(z\boxminus z=t\boxminus t\).

\eqref{sch4}. If \(x\in z\boxminus z\) and \(y\notin z\boxminus z\), then~\eqref{kvh} gives \(v(x)>\rho+v(z)\) and \(v(y)\not>\rho+v(z)\), so \(v(x)>v(y)\). For \(a\in x\boxminus x\),~\eqref{kvh} yields \(v(a)>\rho+v(x)>\rho+v(y)\), hence \(a\in y\boxminus y\).
\end{proof}

\begin{remark}[The dual direction]
\label{rem:sch-to-kvh}
On a hyperfield, superior canonicity of the additive structure implies the existence of a Krasner valuation~\cite[Proposition~4.20]{Lin23}: the multiplicative monoid is used to build the valuation hyperring \(\{x\mid x\boxminus x\subseteq 1\boxminus 1\}\) and to recover~\eqref{kvh}. We do not claim an additive-only converse for general mosaics; isolating such a reconstruction (or showing that multiplication is essential) is left open.
\end{remark}

\begin{corollary}[Hierarchy]
\label{cor:hierarchy}
The layers of structure may be summarised as follows.
\begin{enumerate}[label=(\alph*)]
\item\label{hier-weak} \emph{Weak valuation}~\eqref{v1}--\eqref{v4}: ambient objects of \(\VMscG\) and unit-reflecting morphisms to \(\T(\Gamma)\). Values of summands are constrained; nesting of sums as sets is not (Remark~\ref{rem:val-not-ulm}).
\item\label{hier-fn} \emph{Factor nesting}~\eqref{fn}: a combinatorial shadow of associativity. Without totality it does not imply ASC (free mosaics); ASC alone does not imply FN (\(\mathbb{K}\times\mathbb{K}\)).
\item\label{hier-char} \emph{Characterisation:} total\({}+{}\)FN\(\Rightarrow\)ASC (Proposition~\ref{prop:fn-implies-asc}); for total product-ultrametric mosaics, \(\mathrm{ASC}\Leftrightarrow\mathrm{FN}\) (Theorem~\ref{thm:asc-fn}).
\item\label{hier-kvh} \emph{Krasner axiom}~\eqref{kvh}: total\({}+{}\)FN\({}+{}\)KVH\(\Rightarrow\)ASC and the superiorly canonical package (Theorem~\ref{thm:kvh-sch}); product-ultrametricity is then geometric (sums are balls).
\end{enumerate}
In short:
\[
\text{FN}\;\not\Rightarrow\;\text{ASC},
\qquad
\text{ASC}\;\not\Rightarrow\;\text{FN},
\qquad
\text{product-ultrametric}+\text{FN}\;\not\Rightarrow\;\text{total},
\]
while
\[
\text{total}+\text{FN}\;\Rightarrow\;\text{ASC},
\qquad
\text{total}+\text{product-ultrametric}\;\Rightarrow\;(\mathrm{ASC}\Leftrightarrow\mathrm{FN}),
\]
and
\[
\text{total}+\text{FN}+\text{KVH}\;\Rightarrow\;\text{ASC}+\text{SCH}.
\]
\end{corollary}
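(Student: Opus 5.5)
The hierarchy is a summary of results proved earlier in this section, together with the examples already recorded. The plan is to verify each displayed implication and non-implication by pointing to the relevant result, with no new argument.

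For the positive implications I will cite earlier results directly.
\begin{enumerate}[label=(\roman*)]
\item The implication total\({}+{}\)FN\(\Rightarrow\)ASC is Proposition~\ref{prop:fn-implies-asc}.
\item The equivalence \(\mathrm{ASC}\Leftrightarrow\mathrm{FN}\) for total product-ultrametric mosaics is Theorem~\ref{thm:asc-fn}; its two directions are Propositions~\ref{prop:fn-implies-asc} and~\ref{prop:asc-implies-fn}.
\item The implication total\({}+{}\)FN\({}+{}\)KVH\(\Rightarrow\)ASC\({}+{}\)SCH is Theorem~\ref{thm:kvh-sch}. The claim in item~\ref{hier-kvh} that product-ultrametricity ``is geometric'' is Lemma~\ref{lem:unique-diff}\eqref{ud-iii} combined with Remark~\ref{rem:kvh-total}: sums are open balls, and intersecting balls are nested.
\end{enumerate}

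For the non-implications I will use explicit witnesses.
\begin{enumerate}[label=(\roman*)]
\item FN\(\not\Rightarrow\)ASC is witnessed by \(\mathbf{F}(S)\) with at least two non-basepoints. Proposition~\ref{prop:free-fn} gives factor nesting, and Example~\ref{ex:assoc-examples} shows non-associativity. This is Corollary~\ref{cor:fn-not-asc}.
\item The same \(\mathbf{F}(S)\) witnesses product-ultrametric\({}+{}\)FN\(\not\Rightarrow\)total. Lemma~\ref{lem:free-ulm} gives product-ultrametricity, and cross sums \((a,1)\boxplus(b,1)=\emptyset\) show non-totality. Alternatively, non-totality follows from non-associativity via Proposition~\ref{prop:fn-implies-asc}.
\item ASC\(\not\Rightarrow\)FN is witnessed by \(\mathbb{K}\times\mathbb{K}\), by Proposition~\ref{prop:asc-not-fn}.
\end{enumerate}

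For item~\ref{hier-weak}, I will note two points. The objects of \(\VMscG\) correspond to unit-reflecting morphisms to \(\T(\Gamma)\) by Proposition~\ref{prop:val-as-hom}. Nesting of sums fails in general by Remark~\ref{rem:val-not-ulm}, using \(\mathbb{K}\times\mathbb{K}\) with the min-valuation of Theorem~\ref{thm:vmsc-prod}.

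None of these steps is a real obstacle. The only point needing care is to make sure each witness has all the stated properties at once: that \(\mathbf{F}(S)\) is simultaneously product-ultrametric, satisfies factor nesting, and is non-total, and that \(\mathbb{K}\times\mathbb{K}\) is a valued mosaic that is associative but fails factor nesting. Both have already been checked in the cited results.
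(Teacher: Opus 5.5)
Your proposal is correct and is essentially the paper's own treatment: the corollary has no separate proof there, and each layer and each (non-)implication rests on exactly the results you cite (Proposition~\ref{prop:fn-implies-asc}, Theorem~\ref{thm:asc-fn}, Theorem~\ref{thm:kvh-sch}, Corollary~\ref{cor:fn-not-asc}, Proposition~\ref{prop:asc-not-fn}, Remark~\ref{rem:val-not-ulm}). Your explicit check that \(\mathbf{F}(S)\) is non-total is a small but useful confirmation of the witness for product-ultrametric\({}+{}\)FN\(\not\Rightarrow\)total, which the paper asserts without computing; you give it either via an empty cross sum or via Proposition~\ref{prop:fn-implies-asc} combined with non-associativity.
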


\section{Outlook}
\label{sec:outlook}

The present paper supplies a valuation-first ambient category---valuations as unit-reflecting morphisms to \(\T(\Gamma)\), a complete and cocomplete strict slice, a finitely complete lax category with all small coproducts---and analyses associativity inside that world via factor nesting (Proposition~\ref{prop:fn-implies-asc}, Theorem~\ref{thm:asc-fn}), with the Krasner ball axiom as a geometric upgrade from total\({}+{}\)FN to the superiorly canonical package (Theorem~\ref{thm:kvh-sch}). Natural continuations include the following.

\begin{enumerate}[label=(\arabic*)]
\item \textbf{Lax coequalizers for nontrivial \(\Gamma\).} Determine whether short coequalizers of~\cite{NR23} admit a universal valuation for inequality morphisms in \(\VMscG\) (the strict slice is already cocomplete by Corollary~\ref{cor:strict-slice-coeq}).
\item \textbf{Lax vs.\ strict comparison.} Develop further the relationship between \(\VMscG\) and \(\cMsc/\T(\Gamma)\), including when the \(\min\)-product and the pullback product interact with free constructions and with \(\mathbf{VPol}_\Gamma\).
\item \textbf{Additive reconstruction of Krasner valuations.} Decide whether a purely additive superiorly canonical (or FN+SCH) package on a total mosaic can induce a valuation satisfying~\eqref{kvh}, or whether multiplicative structure as in~\cite[Proposition~4.20]{Lin23} is essential (Remark~\ref{rem:sch-to-kvh}).
\item \textbf{Completion in the slice.} Extend the slice-limit description of Krasner completion~\cite{Lin23compl} from valued hyperfields to diagrams in \(\cMsc/\T(\Gamma)\) or \(\VMscG\).
\item \textbf{Monoidal and multiplicative structure.} Lift the closed monoidal structure of \(\cMsc\)~\cite{NR23} to the valued setting, and define ring-/field-like objects connecting to Krasner valued hyperfields~\cite{Lin23,PhD22}.
\item \textbf{Morphisms.} Characterise monomorphisms, epimorphisms, and isometric embeddings in \(\VMscG\) and in \(\cMsc/\T(\Gamma)\), in the spirit of~\cite{NR23}.
\end{enumerate}

\section*{Acknowledgements}

Parts of the preparation of this manuscript were assisted by the AI system Grok~4.5 (xAI), used via the Grok Build CLI, for drafting, editing, and checking mathematical arguments.
The author retains full responsibility for the correctness of all results and for the final text.

\bibliographystyle{abbrv}
\bibliography{Biblio-compl}

\end{document}